\newcommand{\dT}{{^{\rm{\mathbf{H}}}}{\mathfrak{D}}_{T}^{\alpha,\beta;\chi}}
\newcommand{\dTo}{{^{\rm{\mathbf{H}}}}{\mathfrak{D}}_{0+}^{\alpha,\beta;\chi}}
\newcommand{\R}{\mathbb{R}}
\newtheorem{theorem}{Theorem}
\newtheorem{lemma}[theorem]{Lemma}
\newtheorem{proposition}[theorem]{Proposition}
\newtheorem{corollary}[theorem]{Corollary}
\newtheorem{definition}[theorem]{Definition}
\begin{document}


\oddsidemargin 16.5mm
\evensidemargin 16.5mm

\thispagestyle{plain}

\begin{center}
{\large \sc  Applicable Analysis and Discrete Mathematics}

{\small available online at  http:/$\!$/pefmath.etf.rs }
\end{center}

\noindent{\small{\sc  Appl. Anal. Discrete Math.\ }{\bf x} (xxxx),xxx--xxx.}\\
\noindent{\scriptsize https://doi.org/10.2298/AADMxxxxxxxx}     

\vspace{5cc}
\begin{center}

{\large\bf  Existence and multiplicity for fractional Dirichlet problem with $\gamma(\xi)$-Laplacian equation and Nehari manifold
\rule{0mm}{6mm}\renewcommand{\thefootnote}{}
\footnotetext{\scriptsize ${}^{\ast}$Corresponding author. J. Vanterler da C. Sousa}
\footnotetext{\scriptsize 2020 Mathematics Subject Classification. 26A33, 35B38,35D05,35J60,35J70,58E05.

\rule{2.4mm}{0mm}Keywords and Phrases. Fractional differential equation. $\gamma(\xi)$-Laplacian equation. variational methods. Nehari manifold. multiple positive solutions.}}

\vspace{1cc}
{\large\it J. Vanterler da C. Sousa ${}^{\ast}$, D. S. Oliveira and Ravi P. Agarwal}

\vspace{1cc}
\parbox{24cc}{{\small This paper is divided in two parts. In the first part, we prove coercivity results and minimization of the Euler energy functional. In the second part, we focus on the existence and multiplicity of a positive solution of fractional Dirichlet problem involving the $\gamma(\xi)$-Laplacian equation with non-negative weight functions in $\mathcal{H}^{\alpha,\beta;\chi}_{\gamma(\xi)}(\Lambda,\mathbb{R})$ using some variational techniques and Nehari manifold.
}}

\end{center}


\vspace{1.5cc}
\section{Introduction and Motivation} 

Problem of variable exponent spaces $L^{p(x)}$ and the space $W^{1,p(x)}$ have been a subject of active research area \cite{Alves,Chabrowski,Diening,Emunds1,Edmunds2,Fan,Fan1,Mashiyev}. The specific attention accorded to such problems is due to their applications in mathematical physics. What has been noticed is a growing interest in elliptic problems in Sobolev space $W^{1,p(x)}$ using classical variational techniques. Researchers such as Radulescu \cite{Xiang}, Alves \cite{Alves}, Fan \cite{Fan}, Rabinowitz \cite{Rabinowitz}, Ambrosetti \cite{Ambrosetti}, Winkert \cite{Winkert}, Pucci \cite{Piersanti}, Motreanu \cite{Motreanu}, Papageorgiou \cite{Papa}, Bisci \cite{Figueiredo}, Repovs \cite{Repov}, among other researchers, have dedicated themselves to investigating cutting-edge problems using operators $p(x)$- Laplacian and performing applications.

In 2006 Mihailescu \cite{Miha} discuss the existence of solutions for the problem
\begin{eqnarray*}
\begin{dcases}
- div(|\nabla u|^{p(x)-2} \nabla u )=f(x,u) \quad {\rm in} \,\,\Lambda\\
u(x)=0 \quad {\rm on}\,\, \partial\Lambda.
\end{dcases}
\end{eqnarray*}
For more details see \cite{Miha}. Another interesting work on the existence of solutions involving $p(x)$-Laplacian was investigated by Alves and Barreiro \cite{Alves}. In 2015, Chabrowski and Fu \cite{Chabrowski}, considered the existence of solutions in $W^{1,p(x)}_{0}(\Lambda)$ for the $p(x)$-Laplacian problems in the superlinear and sublinear cases using the mountain pass theorem technique.

In 2007 Wu \cite{Wu} investigated the multiplicity of solutions using Nehari manifold for the elliptic equation 
\begin{eqnarray}
\begin{dcases}
-\Delta_{p} u=\lambda f(x) |u|^{q-2} u+ g(x)|u|^{r-2} u \quad {\rm in} \,\,\Lambda\\
u(x)=0 \quad {\rm on}\,\, \partial\Lambda
\end{dcases}
\end{eqnarray}
where $1<q<p<r<p^{*}$, $\Lambda\subset \mathbb{R}^{N}$ is a bounded domain, $\lambda\in \mathbb{R}/\left\{0 \right\}$, and the weight functions $f,g\in C(\overline{\Lambda})$ are satisfying $f^{\pm}=max\left\{\pm f,0 \right\} \neq 0$ and $g^{\pm}=max\left\{\pm g,0 \right\} \neq 0$. For more details, see \cite{Wu}.

On the other hand, in the recent years increasing attention has been paid to the study of fractional differential equations \cite{Diethelm,Kilbas,Lakshmikantham,Zhou}. Such equations are used to model phenomena in medicine, physics, engineering, biology, among other areas (see for instance \cite{almeirda,Diethelm,Kilbas,Lakshmikantham,ze,Zhou}  and the references therein). Recently, fractional differential equation problems involving $p$-Laplacian have gained attention from some researchers, in particular, involving the $\psi$-Hilfer fractional operator \cite{Biswas,Ledesma,Saoudi,torresn,Truong,Sousa10,Sousa11}. 

In 2020, Sousa et al. \cite{Sousa} proposed a work on variational problems using fractional derivatives. In this sense, the authors discuss the existence and nonexistence of weak solutions for the fractional $p$-Laplacian using the Nehari manifold and application of fibration, of the following problem
\begin{equation}\label{joha}
\left\{
\begin{array}{rcl}
^{\bf H}\mathfrak{D}_{T}^{\alpha ,\beta ;\psi }\left( \left\vert ^{\bf H}\mathfrak{D}_{0+}^{\alpha ,\beta ;\psi }\phi\left( x\right) \right\vert ^{p-2}\text{ }^{\bf H}\mathfrak{D}_{0+}^{\alpha ,\beta ;\psi } \phi(x) \right)   = & \lambda |\phi(x)|^{p-2}\phi(x)+b(x)|\phi(x)|^{q-1}\phi(x)  \\ 
I_{0+}^{\beta \left( \beta -1\right) ;\psi }\phi\left( 0\right)   = & 
I_{T}^{\beta \left( \beta -1\right) ;\psi }\phi\left( T\right) =0.
\end{array}%
\right. 
\end{equation}

Let $\theta=(\theta_{1},\theta_{2},...,\theta_{N})$, $T=(T_{1},T_{2},...,T_{N})$ and $\alpha=(\alpha_{1},\alpha_{2},...,\alpha_{N})$ where $0<\alpha_{1},\alpha_{2},...,\alpha_{N}<1$ with $\theta_{j}<T_{j}$, for all $j\in \left\{1,2,...,N \right\}$, $N\in\mathbb{N}$. Also put $\Lambda=I_{1}\times I_{2}\times \cdots \times I_{N}=[\theta_{1},T_{1}]\times [\theta_{2},T_{2}]\times\cdots \times [\theta_{N},T_{N}]$ where $T_{1},T_{2},...,T_{N}$ and $\theta_{1},\theta_{2},...,\theta_{N}$ positive constants. Consider also $\chi(\cdot)$ be an increasing and positive monotone function on $(\theta_{1},T_{1}),(\theta_{2},T_{2}),...,(\theta_{N},T_{N})$, having a continuous derivative $\chi'(\cdot)$ on $(\theta_{1},T_{1}],(\theta_{2},T_{2}],...,(\theta_{N},T_{N}]$. The $\chi$-Riemann-Liouville fractional partial integral of order $\alpha$ of $N$-variables $\phi=(\phi_{1},\phi_{2},...,\phi_{N})\in L^{1}(\Lambda)$ denoted by ${\bf I}^{\alpha,\chi}_{\theta,\xi_{j}} (\cdot)$, is defined by \cite{J3,J2,J1}
\begin{equation*}
    {\bf I}^{\alpha,\chi}_{\theta,\xi_{j}} \phi(\xi_{j})=\dfrac{1}{\Gamma(\alpha_{j})} \int \int \cdots \int_{\Lambda} \chi'(s_{j})(\chi(\xi_{j})- \chi(s_{j}))^{\alpha_{j}-1} \phi(s_{j}) ds_{j}
\end{equation*}
with $\chi'(s_{j})(\chi(\xi_{j})- \chi(s_{j}))^{\alpha_{j}-1}=\chi'(s_{1})(\chi(\xi_{1})- \chi(s_{1}))^{\alpha_{1}-1} \chi'(s_{2})(\chi(\xi_{2})- \chi(s_{2}))^{\alpha_{2}-1}$ $\cdots\chi'(s_{N})(\chi(\xi_{N})- \chi(s_{N}))^{\alpha_{N}-1}$ where  \\ $\Gamma(\alpha_{j})=\Gamma(\alpha_{1})\Gamma(\alpha_{2})\cdots\Gamma(\alpha_{N})$, $\phi(s_{j})=\phi(s_{1})\phi(s_{2})\cdots \phi(s_{N})$, $ds_{j}=ds_{1}ds_{2}\cdots ds_{N}$, for all $j\in \left\{1,2,...,N \right\}$. Analogously, it is defined ${\bf I}^{\alpha,\chi}_{T,\xi_{j}} (\cdot)$.

Let $\phi,\chi \in C^{n}(\Lambda)$ two functions such that $\chi$ is increasing and $\chi'(\xi_{j})\neq 0$ $j\in \left\{1,2,...,N \right\}$, $\xi_{j}\in \Lambda$. The $\chi$-Hilfer fractional partial derivative of $N$-variables, denoted by ${^{\mathbf H}\mathfrak{D}}^{\alpha,\beta;\chi}_{\theta,\xi_{j}}(\cdot)$, of order $\alpha$ and type $\beta$ $(0\leq \beta\leq 1)$, is defined by \cite{J3,J2,J1}
\begin{equation}\label{derivada}
{^{\mathbf H}\mathfrak{D}}^{\alpha,\beta;\chi}_{\theta,\xi_{j}}\phi(\xi_{j})= {\bf I}^{\beta(1-\alpha),\chi}_{\theta,\xi_{j}} \Bigg(\frac{1}{\chi'(\xi_{j})} \frac{\partial^{N}} {\partial \xi_{j}} \Bigg) {\bf I}^{(1-\beta)(1-\alpha),\chi}_{\theta,\xi_{j}} \phi(\xi_{j})
\end{equation}
with $\partial \xi_{j}=\partial \xi_{1}, \partial \xi_{2} \cdots \partial \xi_{N}$ and $\chi'(\xi_{j})=\chi'(\xi_{1}) \chi'(\xi_{2})\cdots\chi'(\xi_{N})$, for all $ j\in \left\{1,2,...,N \right\}$. Analogously it is defined ${^{\mathbf H}\mathfrak{D}}^{\alpha,\beta;\chi}_{T,\xi_{j}} (\cdot)$.

Throughout this work, we will use the following notations ${\bf I}^{\alpha,\chi}_{T} (\cdot):={\bf I}^{\alpha,\chi}_{T,\xi_{j}} (\cdot)$, ${\bf I}^{\alpha,\chi}_{\theta} (\cdot):={\bf I}^{\alpha,\chi}_{\theta,\xi_{j}} (\cdot)$, ${^{\mathbf H}\mathfrak{D}}^{\alpha,\beta;\chi}_{\theta} (\cdot):={^{\mathbf H}\mathfrak{D}}^{\alpha,\beta;\chi}_{\theta,\xi_{j}} (\cdot)$ and ${^{\mathbf H}\mathfrak{D}}^{\alpha,\beta;\chi}_{T} (\cdot):={^{\mathbf H}\mathfrak{D}}^{\alpha,\beta;\chi}_{T,\xi_{j}} (\cdot)$.

Motivated by the above works, in the present paper, we consider the fractional Dirichlet problem involving the $\gamma(\xi)$-Laplacian equation given by
\begin{eqnarray}\label{1}   
\begin{dcases}
\dT\left(\left|\dTo \phi(\xi)\right|^{\gamma(\xi)-2} \dTo \phi(\xi)\right)=\lambda \eta(\xi)|\phi|^{\omega(\xi)-2}\phi+
\mathcal{A}(\xi)|\phi|^{s(\xi)-2}\phi\\
\phi(\xi)=0 \quad {\rm on}\,\, \partial\Lambda
\end{dcases}
\end{eqnarray}
where $\gamma,\omega,s\in C(\overline{\Lambda})$ such that $1<\omega(\xi)<\gamma(\xi)<s(\xi)<\gamma_{\alpha}^{*}(\xi), \,\,\gamma_{\alpha}^{*}(\xi)=\frac{2\gamma(\xi)}{2-\alpha \gamma(\xi)}$ if $2>\alpha \gamma(\xi);\gamma_{\alpha}^{*}(\xi)=\infty$ if \\$2\leq \gamma(\xi),1<\gamma^{-}:=\displaystyle{ess\,inf_{\xi\in\Lambda} \gamma(\xi)\leq \gamma(\xi)\leq \gamma^{+}:=ess\,sup_{\xi\in\Lambda}\gamma(\xi)<\infty}$, $ 1<\omega^{-}\leq{\omega^{+}}<\gamma^{-}\leq \gamma^{+}<s^{-}\leq s^{+},\lambda>0\in\R$ and $\eta,\mathcal{A}\in C(\overline{\Lambda})$ are non-negative weight functions with compact support in $\Lambda:=[0,T]\times [0,T]$ and $\dTo(\cdot)$ and $\dT(\cdot)$ are $\chi$-Hilfer fractional derivative of order $1/\gamma(\xi)<\alpha<1$ and type $0\leq{\beta}\leq{1}$ given by Eq.(\ref{derivada}). The fractional operator $\dT\left(\Big|\dTo \phi(\xi)\Big|^{\gamma(\xi)-2} \dTo \phi(\xi)\right)$ is a generalization of the operator \\$\dT\left(\Big|\dTo \phi(\xi)\Big|^{\gamma-2} \dTo \phi(\xi)\right)$, in which $\gamma(\xi)=\gamma>1$.

The corresponding Euler functional of our problem (\ref{1}) is defined by
\begin{equation}\label{euler}
    \mathfrak{E}_\lambda(\phi)=\int_{\Lambda}\frac{1}{\gamma(\xi)}\Big|\dTo\,\phi\Big|^{\gamma(\xi)}d\xi-
\lambda\int_{\Lambda}\frac{1}{\omega(\xi)}\eta(\xi)|\phi|^{\omega(\xi)}d\xi-\int_{\Lambda}\frac{1}{s(\xi)}\mathcal{A}(\xi)|\phi|^{s(\xi)}d\xi.
\end{equation}

The main contributions and consequences of our paper, which becomes clearer in detail as follows:
\begin{enumerate}

\item  First, we present a new class of problems with $\gamma(\xi)$-Laplacian of variable exponents as detailed
by Eq.(\ref{1}).

\item  We prove some coercivity results and minimization of the Euler energy functional Eq.(\ref{euler}).

\item  We establish the multiplicity results of positive solutions for Eq.(\ref{1}) with nonnegative weight functions.

\item  We prove that the fractional Eq.(\ref{1}) has at least two positive solutions.

\item  A natural consequence of the results investigated here is the classic case when the limit $\alpha\rightarrow 1$.
\end{enumerate}

To investigate the main results as highlighted above, we make use of the Nehari manifold technique.

The rest of the article is divided as follows: Section 2, we present some important concepts and results for use throughout the paper, in particular, we highlight the proof of an extension to the Harnack inequality for the $\chi$-Hilfer fractional operator. Section 3, we investigate the main results of the paper, i.e, we discuss the existence and multiplicity of positive solutions to Eq.(\ref{1}) using the Nehari manifold and the Harnack inequality.

\section{Mathematical background - auxiliary results}

Consider the space \cite{Emunds1,Fan}
$$ \mathscr{L}^{\gamma(\xi)}(\Lambda)=\big\{\phi:\Lambda\rightarrow\mathbb{R}:\int_{\Lambda}|\phi(\xi)|^{\gamma(\xi)}d\xi<\infty\big\},$$
with the norm 
$${\lVert \phi \rVert}_{\gamma(\xi)}=\inf \bigg\{\delta>0:\int_{\Lambda}\bigg|\frac{\phi(\xi)}{\delta}\bigg|^{\gamma(\xi)}d\xi\leq{1}\bigg\}$$
(so-called Luxemburg norm) and $\Big(\mathscr{L}^{\gamma(\xi)}(\Lambda),{\lVert\,\cdot\, \rVert}_{\gamma(\xi)}\Big)$ is a Banach space. Write, $$\mathscr{L}^{\infty}=\big\{\gamma \in \mathscr{L}^{\infty}(\Lambda),\gamma^{-}>1\big\}.$$

Let $c(\xi)$ be a measurable real valued function and $\phi(\xi)>0$ for $\xi\in\Lambda$. Then the space $\mathscr{L}_{\phi(\xi)}^{\gamma(\xi)}(\Lambda)$ is defined with the norm \cite{Emunds1,Fan}
$${\lVert \phi \rVert}_{(\gamma(\xi),c(\xi))}=\inf\bigg\{\delta>0:\int_{\Lambda}c(\xi)\,\bigg|\frac{\phi(\xi)}{\delta}\bigg|^{\gamma(\xi)}d\xi\leq{1}\bigg\}.$$

\begin{definition}{\rm \cite{maria}} Let $0<\alpha\leq{1}, 0\leq{\beta}\leq{1}$ and $\gamma \in C^{+}(\overline{\Lambda})$. The left-sided $\chi$-fractional derivative space $\mathcal{H}_{\gamma(\xi),0}^{\alpha,\beta;\chi}:={\mathcal{H}}_{\gamma(\xi)}^{\alpha,\beta;\chi}(\Lambda)$
is defined by
\begin{eqnarray*}
\mathcal{H}_{\gamma(\xi),0}^{\alpha,\beta;\chi}&=&\left\{
\begin{array}{cc} \phi\in{L}^{\gamma(\xi)}(\Lambda):{^{\mathbf H}{\mathfrak{D}}}^{\alpha,\beta;\chi}_{\theta}\phi\in \mathscr{L}^{\gamma(\xi)}(\Lambda),
\phi(\Lambda)=0
\end{array}
\right\}.
\end{eqnarray*}
with the following norm
$$\|\phi\|_{\mathcal{H}_{\gamma(\xi)}^{\alpha\beta;\chi}}=\inf\left\{k>0:\int_{\Lambda}\bigg|\frac{\phi(\xi)}{k}\bigg|^{\gamma(\xi)}
+\bigg|\frac{{^{\mathbf H}{\mathfrak{D}}}^{\alpha,\beta;\chi}_{0+}\phi(\xi)}{k}\bigg|^{\gamma(\xi)}d\xi\leq{1}\right\}.$$
\end{definition}

The space $\mathcal{H}_{\gamma(\xi),0}^{\alpha,\beta;\chi}(\Lambda)$ is denoted by the closure of
$C_{0}^{\infty}(\Lambda)$ in $\mathcal{H}_{\gamma(\xi)}^{\alpha,\beta;\chi}(\Lambda)$. We will use
${\lVert \phi \rVert}_{\mathcal{H}_{\gamma(\xi)}^{\alpha,\beta;\chi}(\Lambda)}=\Big|\dTo\,\phi\Big|_{\gamma(\xi)}$
for $\phi\in\mathcal{H}_{\gamma(\xi)}^{\alpha,\beta;\chi}(\Lambda)$ in the following discussions.

\begin{proposition}{\rm\cite{maria,Sousa,Sousa10}} Let $0<\alpha \leq 1,$ $0\leq \beta \leq 1$ and $1<\gamma(\xi)<\infty $. Assume that $\alpha >1/\gamma(\xi)$ and the sequence $\left\{ \phi_{k}\right\} $ converges weakly to $\phi $ in $\mathcal{H}_{\gamma(\xi)}^{\alpha ,\beta ;\chi } (\Lambda;\mathbb{R})$ i.e., $\phi_{k}\rightharpoonup \phi$. Then $\phi_{k}\to \phi$ in $C\left( \Lambda,\mathbb{R}\right) $, i.e., $\left\Vert \phi-\phi_{k}\right\Vert _{\infty }\to 0$ as $k\rightarrow \infty$.
\end{proposition}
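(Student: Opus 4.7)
The plan is to establish the compact embedding $\mathcal{H}^{\alpha,\beta;\chi}_{\gamma(\xi)}(\Lambda;\mathbb{R})\hookrightarrow\hookrightarrow C(\Lambda;\mathbb{R})$ under the hypothesis $\alpha>1/\gamma(\xi)$, from which the stated conclusion about weakly convergent sequences follows by standard functional-analytic reasoning.

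First, since $\{\phi_k\}$ converges weakly in a Banach space, the Banach--Steinhaus principle gives a uniform bound $\|\phi_k\|_{\mathcal{H}^{\alpha,\beta;\chi}_{\gamma(\xi)}}\le M$. Next, using the fact that $\phi_k\in \mathcal{H}^{\alpha,\beta;\chi}_{\gamma(\xi),0}(\Lambda)$ satisfies vanishing trace on $\partial\Lambda$, one inverts the $\chi$-Hilfer derivative to write
\begin{equation*}
\phi_k(\xi)=\frac{1}{\Gamma(\alpha)}\int_{0}^{\xi}\chi'(s)\bigl(\chi(\xi)-\chi(s)\bigr)^{\alpha-1}\dTo\phi_k(s)\,ds,
\end{equation*}
in the one-variable case (and an analogous iterated formula for the $N$-variable situation). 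Applying the Hölder inequality in the variable-exponent space $\mathscr{L}^{\gamma(\xi)}(\Lambda)$ to this representation, and exploiting that $\alpha-1/\gamma(\xi)\ge \alpha-1/\gamma^{-}>0$, produces a uniform pointwise bound
\begin{equation*}
|\phi_k(\xi)|\le C_{\chi,\alpha,\gamma}\,\|\dTo\phi_k\|_{\gamma(\xi)}\le C_{\chi,\alpha,\gamma}\,M,
\end{equation*}
and, by estimating the difference of two kernel integrals in the same way, the Hölder-type estimate
\begin{equation*}
|\phi_k(\xi_1)-\phi_k(\xi_2)|\le C\,\bigl|\chi(\xi_1)-\chi(\xi_2)\bigr|^{\alpha-1/\gamma^{-}}\,\|\dTo\phi_k\|_{\gamma(\xi)}.
\end{equation*}
Since $\chi$ has a continuous derivative on $\Lambda$, this translates into equicontinuity of $\{\phi_k\}$ on the compact set $\Lambda$.

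Having shown that $\{\phi_k\}$ is uniformly bounded and equicontinuous in $C(\Lambda;\mathbb{R})$, the Arzel\`a--Ascoli theorem furnishes a subsequence $\phi_{k_j}\to\widetilde{\phi}$ in $C(\Lambda;\mathbb{R})$. The weak convergence $\phi_k\rightharpoonup\phi$ in $\mathcal{H}^{\alpha,\beta;\chi}_{\gamma(\xi)}$ implies weak convergence in $\mathscr{L}^{\gamma(\xi)}(\Lambda)$ (via the continuous embedding), whence $\widetilde{\phi}=\phi$ almost everywhere, and by continuity $\widetilde{\phi}\equiv\phi$ on $\Lambda$. A standard subsequence-of-subsequences argument then upgrades the convergence of the full sequence: every subsequence of $\{\phi_k\}$ has a further uniformly convergent subsequence whose limit must be $\phi$, so $\phi_k\to\phi$ in $C(\Lambda;\mathbb{R})$.

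The main technical obstacle is the variable-exponent Hölder estimate: one must carefully pass from the Luxemburg norm $\|\dTo\phi_k\|_{\gamma(\xi)}$ to a modular-type integral bound using the standard equivalence in $\mathscr{L}^{\gamma(\xi)}$, and keep track of whether to use $\gamma^{-}$ or $\gamma^{+}$ in the Hölder exponent on the kernel $(\chi(\xi)-\chi(s))^{\alpha-1}$. The condition $\alpha>1/\gamma(\xi)$ pointwise, together with $\gamma^{-}>1$, is exactly what ensures the singular kernel is integrable in the conjugate exponent, making the estimate uniform in $\xi\in\Lambda$.
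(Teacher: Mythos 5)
The paper does not prove this proposition --- it is imported verbatim from \cite{maria,Sousa,Sousa10} --- so there is no in-paper argument to compare against; your proof is essentially the standard one given in those references, and it is sound. The load-bearing step, which you assert rather than derive, is the inversion formula $\phi_k={\bf I}^{\alpha,\chi}_{0+}\,\dTo\phi_k$: this requires the semigroup property of the $\chi$-fractional integrals together with the vanishing of the initial term coming from the boundary condition, and it is precisely the lemma the cited papers establish first. Two smaller points worth making explicit: the strict inequality $\alpha>1/\gamma^{-}$ needed for the conjugate-exponent integrability of the kernel follows from $\alpha>1/\gamma(\xi)$ only because $\gamma\in C(\overline{\Lambda})$ attains its infimum on the compact set $\overline{\Lambda}$; and in the $N$-variable setting the kernel is a tensor product of one-dimensional kernels, so the H\"older and equicontinuity estimates must be iterated coordinate-wise --- this works, but it is where the actual computation lives and deserves more than a parenthetical.
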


\begin{proposition}{\rm \cite{Sousa10,Sousa11}} The conjugate space of $\mathscr{L}^{\gamma(\xi)}(\Lambda)$ is $\mathscr{L}^{\gamma'(\xi)}(\Lambda)$, where $\frac{1}{\gamma'(\xi)}+\frac{1}{\gamma(\xi)}=1.$ For any $\phi\in \mathscr{L}^{\gamma(\xi)}(\Lambda)$ and $v\in \mathscr{L}^{\gamma'(\xi)}(\Lambda),$ we have
\begin{eqnarray*}
\bigg|\int_{\Lambda} \phi(\xi)v(\xi)d\xi\bigg|&\leq & \bigg(\frac{1}{\gamma^-}+\frac{1}{(\gamma')^-}\bigg){\lVert \phi \rVert}_{\gamma(\xi)}{\lVert v \rVert}_{\gamma'(\xi)}\\
&\leq &2{\lVert \phi \rVert}_{\gamma(\xi)}{\lVert v \rVert}_{\gamma'(\xi)}.
\end{eqnarray*}
\end{proposition}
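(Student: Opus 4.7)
The plan is to prove the displayed Hölder-type inequality directly from the pointwise Young inequality with variable exponents and to cite the standard duality argument (already available in \cite{Sousa10,Sousa11}) for the identification $(\mathscr{L}^{\gamma(\xi)})^{\ast}\cong \mathscr{L}^{\gamma'(\xi)}$. The Luxemburg norm will be the main tool, together with the modular inequality that a function has Luxemburg norm at most $1$ whenever its modular is at most $1$ (and conversely).

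First I would dispose of trivial cases: if $\|\phi\|_{\gamma(\xi)}=0$ or $\|v\|_{\gamma'(\xi)}=0$ then $\phi$ or $v$ vanishes a.e.\ on $\Lambda$ and the inequality holds with zero on both sides. Assuming both norms are strictly positive, I would normalise by setting
\[
\widetilde{\phi}(\xi):=\frac{\phi(\xi)}{\|\phi\|_{\gamma(\xi)}},\qquad \widetilde{v}(\xi):=\frac{v(\xi)}{\|v\|_{\gamma'(\xi)}}.
\]
By the definition of the Luxemburg norm (taking infima and passing to the limit along a minimising sequence $\delta_{n}\downarrow\|\phi\|_{\gamma(\xi)}$, together with Fatou's lemma), the corresponding modulars satisfy
\[
\int_{\Lambda}\bigl|\widetilde{\phi}(\xi)\bigr|^{\gamma(\xi)}\,d\xi\leq 1,\qquad \int_{\Lambda}\bigl|\widetilde{v}(\xi)\bigr|^{\gamma'(\xi)}\,d\xi\leq 1.
\]

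Next I would apply the classical Young inequality pointwise with the variable exponents $\gamma(\xi)$ and $\gamma'(\xi)$, which are conjugate at each point by hypothesis:
\[
\bigl|\widetilde{\phi}(\xi)\widetilde{v}(\xi)\bigr|\leq \frac{|\widetilde{\phi}(\xi)|^{\gamma(\xi)}}{\gamma(\xi)}+\frac{|\widetilde{v}(\xi)|^{\gamma'(\xi)}}{\gamma'(\xi)}\qquad\text{a.e. in }\Lambda.
\]
Integrating over $\Lambda$ and using the uniform bounds $1/\gamma(\xi)\leq 1/\gamma^{-}$ and $1/\gamma'(\xi)\leq 1/(\gamma')^{-}$ together with the modular estimates above gives
\[
\int_{\Lambda}\bigl|\widetilde{\phi}(\xi)\widetilde{v}(\xi)\bigr|\,d\xi\leq \frac{1}{\gamma^{-}}\int_{\Lambda}|\widetilde{\phi}|^{\gamma(\xi)}\,d\xi+\frac{1}{(\gamma')^{-}}\int_{\Lambda}|\widetilde{v}|^{\gamma'(\xi)}\,d\xi\leq \frac{1}{\gamma^{-}}+\frac{1}{(\gamma')^{-}}.
\]
Multiplying by $\|\phi\|_{\gamma(\xi)}\|v\|_{\gamma'(\xi)}$ and using $|\int \phi v\,d\xi|\leq \int|\phi v|\,d\xi$ yields the first inequality. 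The second inequality is immediate from $\gamma^{-},(\gamma')^{-}>1$, which gives $1/\gamma^{-}+1/(\gamma')^{-}\leq 2$.

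The main obstacle, if any, is justifying the modular bound $\int |\phi/\|\phi\|_{\gamma(\xi)}|^{\gamma(\xi)}\,d\xi\leq 1$ rigorously from the Luxemburg definition (the infimum need not be attained); I would settle this with the routine Fatou/monotone convergence argument sketched above. The identification of $(\mathscr{L}^{\gamma(\xi)})^{\ast}$ with $\mathscr{L}^{\gamma'(\xi)}$ — which is a separate Riesz-representation-type statement — follows from the Hölder bound (which exhibits the inclusion $\mathscr{L}^{\gamma'(\xi)}\hookrightarrow (\mathscr{L}^{\gamma(\xi)})^{\ast}$) together with the reverse direction proved in \cite{Sousa10,Sousa11}, so I would simply invoke those references for that half of the statement.
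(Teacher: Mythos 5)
Your argument is correct and is exactly the standard proof of the generalized H\"older inequality in variable-exponent Lebesgue spaces (normalization in the Luxemburg norm, the modular bound $\rho(\phi/\lVert\phi\rVert_{\gamma(\xi)})\leq 1$ via monotone convergence, pointwise Young's inequality, and the bounds $1/\gamma(\xi)\leq 1/\gamma^{-}$, $1/\gamma'(\xi)\leq 1/(\gamma')^{-}$). The paper itself offers no proof --- it states the proposition as a citation of \cite{Sousa10,Sousa11} --- and the argument given in those sources (and in the classical references on $\mathscr{L}^{p(x)}$ spaces) is the same one you wrote, so there is nothing further to reconcile; deferring the duality identification $(\mathscr{L}^{\gamma(\xi)})^{\ast}\cong\mathscr{L}^{\gamma'(\xi)}$ to the references is likewise consistent with how the paper treats it.
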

\begin{proposition}{\rm \cite{Emunds1,Fan}\label{proposition 2-2}}
Denote $\rho(\phi)=\int_{\Lambda}|\phi(\xi)|^{\gamma(\xi)}d\xi,\,\,\forall \phi\in \mathscr{L}^{\gamma(\xi)}(\Lambda)$, then we have
\begin{enumerate}
\item ${\lVert \phi \rVert}_{\gamma(\xi)}<1\,\,(=1,>1)\Longleftrightarrow \rho(\phi)<1;$
\item ${\lVert \phi \rVert}_{\gamma(\xi)}>1\Rightarrow {\lVert \phi \rVert}_{\gamma(\xi)}^{\gamma^-}\leq \rho(\phi)\leq {\lVert \phi \rVert}_{\gamma(\xi)}^{\gamma^+};$
\item ${\lVert \phi \rVert}_{\gamma(\xi)}<1\Rightarrow {\lVert \phi \rVert}_{\gamma(\xi)}^{\gamma^-}\leq \rho(\phi)\leq {\lVert \phi \rVert}_{\gamma(\xi)}^{\gamma^+};$
\end{enumerate}
\end{proposition}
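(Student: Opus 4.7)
The proof plan rests on a single auxiliary fact and then routine manipulation of the modular; I will present it in four steps.

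\textbf{Step 1 (Modular behavior under scaling).} For a nonzero $\phi\in\mathscr{L}^{\gamma(\xi)}(\Lambda)$, I would first show that the map $\delta\mapsto\rho(\phi/\delta)=\int_{\Lambda}|\phi(\xi)|^{\gamma(\xi)}\delta^{-\gamma(\xi)}d\xi$ is continuous and strictly decreasing on $(0,\infty)$. Continuity follows from the dominated convergence theorem applied near any fixed $\delta_{0}>0$, using $1<\gamma^{-}\leq\gamma(\xi)\leq\gamma^{+}<\infty$ to dominate the integrand locally in $\delta$. Strict decrease is immediate because $\delta\mapsto\delta^{-\gamma(\xi)}$ is strictly decreasing pointwise.

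\textbf{Step 2 (Key identity $\rho(\phi/\|\phi\|_{\gamma(\xi)})=1$).} From the definition of the Luxemburg norm $\|\phi\|_{\gamma(\xi)}=\inf\{\delta>0:\rho(\phi/\delta)\leq 1\}$, the infimum is attained in the limit: there exists a sequence $\delta_{n}\searrow\|\phi\|_{\gamma(\xi)}$ with $\rho(\phi/\delta_{n})\leq 1$, and by Step 1 this forces $\rho(\phi/\|\phi\|_{\gamma(\xi)})\leq 1$. Conversely, for any $\delta<\|\phi\|_{\gamma(\xi)}$ the infimum's definition gives $\rho(\phi/\delta)>1$, and letting $\delta\nearrow\|\phi\|_{\gamma(\xi)}$ combined with the continuity of Step 1 yields $\rho(\phi/\|\phi\|_{\gamma(\xi)})\geq 1$. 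Hence equality.

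\textbf{Step 3 (Part 1).} The three cases are handled simultaneously using Step 2. Put $k=\|\phi\|_{\gamma(\xi)}$. If $k<1$, then $k^{\gamma(\xi)}\leq k^{\gamma^{-}}<1$, so
\[
\rho(\phi)=\int_{\Lambda}|\phi|^{\gamma(\xi)}d\xi=\int_{\Lambda}k^{\gamma(\xi)}|\phi/k|^{\gamma(\xi)}d\xi\leq k^{\gamma^{-}}\rho(\phi/k)=k^{\gamma^{-}}<1.
\]
The same trick with the reverse inequality $k^{\gamma(\xi)}\geq k^{\gamma^{-}}>1$ in case $k>1$ gives $\rho(\phi)>1$, and $k=1$ gives $\rho(\phi)=\rho(\phi/k)=1$ by Step 2. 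The three implications are exclusive and exhaustive, so they are actually equivalences.

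\textbf{Step 4 (Parts 2 and 3).} Again write $k=\|\phi\|_{\gamma(\xi)}$ and exploit $\rho(\phi/k)=1$. If $k>1$ then $a\mapsto k^{a}$ is increasing, so $k^{\gamma^{-}}\leq k^{\gamma(\xi)}\leq k^{\gamma^{+}}$; multiplying the identity
\[
1=\rho(\phi/k)=\int_{\Lambda}\frac{|\phi(\xi)|^{\gamma(\xi)}}{k^{\gamma(\xi)}}\,d\xi
\]
on the two sides by the appropriate constants yields $k^{-\gamma^{+}}\rho(\phi)\leq 1\leq k^{-\gamma^{-}}\rho(\phi)$, i.e.\ $\|\phi\|_{\gamma(\xi)}^{\gamma^{-}}\leq\rho(\phi)\leq\|\phi\|_{\gamma(\xi)}^{\gamma^{+}}$. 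When $k<1$ the monotonicity of $a\mapsto k^{a}$ is reversed, so the same sandwich argument produces the analogous two-sided bound (with the roles of $\gamma^{-}$ and $\gamma^{+}$ interchanged, consistent with the original inequality after one notes $k^{\gamma^{+}}\leq k^{\gamma^{-}}$ when $k<1$).

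The only delicate point is Step 2, since it is where the hypothesis $\gamma^{+}<\infty$ enters in an essential way through dominated convergence. Once that identity is in hand the remaining work is algebraic bookkeeping, so I expect no further obstacle.
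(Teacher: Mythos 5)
Your argument is correct, and in fact the paper offers no proof to compare it against: Proposition \ref{proposition 2-2} is imported verbatim from the cited references (Edmunds--R\'akosn\'{\i}k and Fan et al.), so your write-up supplies what the paper omits. Steps 1--2 establish the standard norm--modular identity $\rho\bigl(\phi/\lVert\phi\rVert_{\gamma(\xi)}\bigr)=1$ via monotonicity and dominated convergence (legitimate here precisely because $1<\gamma^{-}\leq\gamma^{+}<\infty$ and $\rho(\phi)<\infty$ by the definition of $\mathscr{L}^{\gamma(\xi)}(\Lambda)$), and Steps 3--4 are the routine sandwich estimates; this is exactly the classical proof. One point you should state more bluntly: what your Step 4 actually yields for $k=\lVert\phi\rVert_{\gamma(\xi)}<1$ is
\[
\lVert\phi\rVert_{\gamma(\xi)}^{\gamma^{+}}\leq\rho(\phi)\leq\lVert\phi\rVert_{\gamma(\xi)}^{\gamma^{-}},
\]
which is \emph{not} ``consistent with'' item~3 as printed --- it contradicts it, since for $k<1$ and $\gamma^{-}<\gamma^{+}$ the printed chain $k^{\gamma^{-}}\leq\rho(\phi)\leq k^{\gamma^{+}}$ would force $k^{\gamma^{-}}\leq k^{\gamma^{+}}$, which is false. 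Item~3 in the paper is a typographical error (it duplicates item~2 with the exponents unswapped); the inequality you derived is the correct statement and is the one found in the cited sources, so you should record it as a correction rather than a confirmation. With that caveat made explicit, the proof is complete.
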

\begin{proposition} {\rm\cite{Emunds1,Fan}} If $\phi,\phi_n\in \mathscr{L}^{\gamma(\xi)}(\Lambda),n=1,2,\ldots$, then the follows statements are equivalent
to each other:
\begin{enumerate}
\item $\displaystyle \lim_{n\to\infty}{\lVert \phi_n-\phi\rVert}_{\gamma(\xi)}=0;$
\item $\displaystyle \lim_{n\to\infty}\rho(\phi_n-\phi)=0;$
\item $\phi_n\rightarrow \phi$ in measure on $\Lambda$ and $\displaystyle \lim_{n\to\infty}\rho(\phi_n)=\rho(\phi).$
\end{enumerate}
\end{proposition}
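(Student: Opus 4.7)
The plan is to prove the three-way equivalence by establishing $(1)\Leftrightarrow(2)$, then $(2)\Rightarrow(3)$, and finally the harder direction $(3)\Rightarrow(2)$.

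For $(1)\Leftrightarrow(2)$, I would invoke Proposition \ref{proposition 2-2} directly. If $\|\phi_n-\phi\|_{\gamma(\xi)}\to 0$, then eventually the norm falls below $1$, and parts (1) and (3) of Proposition \ref{proposition 2-2} give $\rho(\phi_n-\phi)\leq \|\phi_n-\phi\|_{\gamma(\xi)}^{\gamma^{-}}\to 0$ (using the small-norm branch). Conversely, if $\rho(\phi_n-\phi)\to 0$, then $\rho(\phi_n-\phi)<1$ for large $n$, which by part (1) of Proposition \ref{proposition 2-2} forces $\|\phi_n-\phi\|_{\gamma(\xi)}<1$, and then the inequality $\|\phi_n-\phi\|_{\gamma(\xi)}^{\gamma^{+}}\leq\rho(\phi_n-\phi)$ yields norm convergence.

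For $(2)\Rightarrow(3)$, convergence in measure is obtained by Chebyshev's inequality: for every $\varepsilon>0$,
\begin{equation*}
|\{\xi\in\Lambda:|\phi_n(\xi)-\phi(\xi)|>\varepsilon\}|\leq \int_{\Lambda}\frac{|\phi_n-\phi|^{\gamma(\xi)}}{\min(\varepsilon^{\gamma^{-}},\varepsilon^{\gamma^{+}})}\,d\xi\longrightarrow 0.
\end{equation*}
The convergence $\rho(\phi_n)\to\rho(\phi)$ then follows from the pointwise inequality $\bigl||a|^{\gamma(\xi)}-|b|^{\gamma(\xi)}\bigr|\leq \gamma^{+}\bigl(|a|^{\gamma(\xi)-1}+|b|^{\gamma(\xi)-1}\bigr)|a-b|$ combined with H\"older's inequality in $\mathscr{L}^{\gamma(\xi)}(\Lambda)$ (Proposition 2 in the excerpt) applied to the conjugate pair $\gamma(\xi)/(\gamma(\xi)-1)$ and $\gamma(\xi)$.

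The main difficulty is $(3)\Rightarrow(2)$, which I would handle via a Vitali-type argument. Given convergence in measure, every subsequence of $\{\phi_n-\phi\}$ admits a further subsequence converging to $0$ a.e. The hypothesis $\rho(\phi_n)\to\rho(\phi)$ supplies equi-integrability of the family $\{|\phi_n|^{\gamma(\xi)}\}$; concretely, I would use the inequality
\begin{equation*}
|\phi_n-\phi|^{\gamma(\xi)}\leq 2^{\gamma^{+}-1}\bigl(|\phi_n|^{\gamma(\xi)}+|\phi|^{\gamma(\xi)}\bigr),
\end{equation*}
define $g_n:=2^{\gamma^{+}-1}(|\phi_n|^{\gamma(\xi)}+|\phi|^{\gamma(\xi)})-|\phi_n-\phi|^{\gamma(\xi)}\geq 0$, and apply Fatou's lemma to the a.e.-convergent subsequence. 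Since $\int_\Lambda g_n\,d\xi\to \int_\Lambda 2^{\gamma^{+}}|\phi|^{\gamma(\xi)}\,d\xi$ by $\rho(\phi_n)\to\rho(\phi)$, Fatou forces $\limsup \rho(\phi_n-\phi)\leq 0$ along that subsequence, hence $\rho(\phi_n-\phi)\to 0$ for the whole sequence by a standard subsequence-of-subsequence argument. The delicate point, and the one I expect to require the most care, is justifying the interchange of limits in the variable-exponent setting, where the integrand $|\phi_n-\phi|^{\gamma(\xi)}$ does not possess a single fixed power majorant and one must exploit the uniform bound $\gamma^{-}\leq \gamma(\xi)\leq \gamma^{+}$ to control the family.
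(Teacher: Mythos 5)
This proposition is quoted in the paper from the references (Edmunds--R\'akosn\'{\i}k and Fan et al.) and no proof is given there, so there is nothing internal to compare against; judged on its own, your argument is correct and is essentially the standard proof in the variable-exponent literature. The equivalence $(1)\Leftrightarrow(2)$ via the modular--norm inequalities is right, with one caveat: you correctly use the small-norm form $\rho(\phi)\leq\lVert\phi\rVert_{\gamma(\xi)}^{\gamma^-}$ and $\lVert\phi\rVert_{\gamma(\xi)}^{\gamma^+}\leq\rho(\phi)$ when $\lVert\phi\rVert_{\gamma(\xi)}<1$, whereas the paper's statement of that auxiliary proposition misprints the small-norm case as identical to the large-norm case; your version is the correct one and is what the argument needs, so this is a point in your favor rather than a gap. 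The Chebyshev step for convergence in measure, the H\"older estimate for $\rho(\phi_n)\to\rho(\phi)$ (which works because $\rho_{\gamma'}\bigl(|\phi_n|^{\gamma(\xi)-1}\bigr)=\rho(\phi_n)$ is bounded, hence so is the conjugate norm), and the Fatou argument applied to $g_n=2^{\gamma^+-1}\bigl(|\phi_n|^{\gamma(\xi)}+|\phi|^{\gamma(\xi)}\bigr)-|\phi_n-\phi|^{\gamma(\xi)}\geq 0$ along an a.e.-convergent subsequence, followed by the subsequence-of-subsequences principle, are all sound; the uniform bound $\gamma(\xi)\leq\gamma^+$ already makes the constant $2^{\gamma^+-1}$ a valid pointwise majorant, so the ``delicate interchange of limits'' you flag at the end is in fact fully handled by the argument you wrote.
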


\begin{proposition} {\rm\cite{Emunds1,Fan,maria}} If $\gamma^{-}>1$ and $\gamma^{+}<\infty$, then the spaces  $\mathscr{L}^{\gamma(\xi)}(\Lambda)$,
$\mathscr{L}_{c(\xi)}^{\gamma(\xi)}(\Lambda)$ and $\mathcal{H}_{\gamma(\xi)}^{\alpha,\beta;\chi}(\Lambda)$ are
separable and reflexive Banach spaces.
\end{proposition}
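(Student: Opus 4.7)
The plan is to handle the three spaces sequentially, reducing each case to properties of the variable exponent Lebesgue space $\mathscr{L}^{\gamma(\xi)}(\Lambda)$, which does the heavy lifting.

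For $\mathscr{L}^{\gamma(\xi)}(\Lambda)$ I would first verify separability by a standard density argument: simple functions with rational coefficients supported on a countable generating family of measurable subsets of $\Lambda$ form a countable dense subset. Convergence in the Luxemburg norm can be recast as convergence of the modular $\rho$ via the modular--norm equivalence in the preceding proposition, and the bounds $1<\gamma^{-}\leq\gamma^{+}<\infty$ allow a dominated convergence argument to approximate any $\phi\in\mathscr{L}^{\gamma(\xi)}(\Lambda)$. For reflexivity I would invoke uniform convexity: under $\gamma^{-}>1$ and $\gamma^{+}<\infty$ the Clarkson-type inequalities for variable exponents hold, so $\mathscr{L}^{\gamma(\xi)}(\Lambda)$ is uniformly convex and hence reflexive by Milman--Pettis.

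For the weighted space $\mathscr{L}_{c(\xi)}^{\gamma(\xi)}(\Lambda)$ I would exhibit the pointwise multiplication map $\phi\mapsto c(\xi)^{1/\gamma(\xi)}\phi$. A direct comparison of the two Luxemburg norms shows this is a surjective linear isometry onto $\mathscr{L}^{\gamma(\xi)}(\Lambda)$ (under the implicit hypothesis $c>0$ a.e.), and separability and reflexivity are preserved by isometric isomorphisms.

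For $\mathcal{H}_{\gamma(\xi)}^{\alpha,\beta;\chi}(\Lambda)$ the natural approach is an isometric embedding. Define
\[
T:\mathcal{H}_{\gamma(\xi)}^{\alpha,\beta;\chi}(\Lambda)\to \mathscr{L}^{\gamma(\xi)}(\Lambda)\times\mathscr{L}^{\gamma(\xi)}(\Lambda),\qquad T\phi=(\phi,\dTo\phi),
\]
and equip the product with the Luxemburg-type norm matching the definition of $\|\cdot\|_{\mathcal{H}_{\gamma(\xi)}^{\alpha,\beta;\chi}}$. The product space is separable and reflexive by the first paragraph, so it is enough to show that $T(\mathcal{H}_{\gamma(\xi)}^{\alpha,\beta;\chi})$ is closed, since closed subspaces of separable reflexive Banach spaces inherit both properties. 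This closedness is the main obstacle: given $\phi_n\to\phi$ and $\dTo\phi_n\to g$ in $\mathscr{L}^{\gamma(\xi)}$, one must identify $g=\dTo\phi$ almost everywhere. I would carry this out by testing $\dTo\phi_n$ against an arbitrary $\varphi\in C_0^{\infty}(\Lambda)$, moving the operator onto $\varphi$ via the integration-by-parts identity available for the $\chi$-Hilfer derivative, and then using the H\"older inequality of the earlier proposition to pass to the limit on both sides. The regularity of $\chi$ assumed in the paper ensures that the transpose kernels lie in $\mathscr{L}^{\gamma'(\xi)}(\Lambda)$, which makes the limit transition rigorous and identifies $g$ as the $\chi$-Hilfer derivative of $\phi$.
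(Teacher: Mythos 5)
The paper states this proposition without proof, importing it from the cited references, so there is no in-text argument to compare against; judged on its own, your proposal is correct and follows the standard route used in those references: separability and uniform convexity (hence reflexivity, via Milman--Pettis) of $\mathscr{L}^{\gamma(\xi)}(\Lambda)$ under $1<\gamma^{-}\leq\gamma^{+}<\infty$, the isometry $\phi\mapsto c(\xi)^{1/\gamma(\xi)}\phi$ for the weighted space, and the realization of $\mathcal{H}_{\gamma(\xi)}^{\alpha,\beta;\chi}(\Lambda)$ as a closed subspace of the product $\mathscr{L}^{\gamma(\xi)}\times\mathscr{L}^{\gamma(\xi)}$. The only step that genuinely needs care is the closedness of the image of $T\phi=(\phi,\dTo\phi)$, i.e.\ identifying the limit $g$ with $\dTo\phi$; your plan to do this by duality against $C_0^{\infty}(\Lambda)$ via the fractional integration-by-parts formula is exactly how the cited works handle it, and it is the one place where the regularity hypotheses on $\chi$ and the condition $\alpha>1/\gamma(\xi)$ actually enter.
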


\begin{proposition} {\rm\cite{Emunds1}} Let $\gamma(\xi)$ and $\omega(\xi)$ be measurable functions such that $\gamma(\xi)\in \mathscr{L}^{\infty}(\Lambda)$ and
$1\leq \gamma(\xi)\omega(\xi)\leq\infty$ for $\phi\in \xi\in\Lambda$. Let $\phi\in \mathscr{L}^{\omega(\xi)}(\Lambda),\,\,\phi\neq 0$. Then
\begin{eqnarray*}
&&|\phi|_{\gamma(\xi)\omega(\xi)}\leq{1} \quad \Rightarrow \quad |\phi|_{\gamma(\xi)\omega(\xi)}^{\gamma^+}\leq\big||\phi|^{\gamma(\xi)}\big|_{\omega(\xi)}\leq |\phi|_{\gamma(\xi)\omega(\xi)}^{\gamma^-}\\
&&|\phi|_{\gamma(\xi)\omega(\xi)}\geq{1} \quad \Rightarrow \quad |\phi|_{\gamma(\xi)\omega(\xi)}^{\gamma^-}\leq\big||\phi|^{\gamma(\xi)}\big|_{\omega(\xi)}\leq |\phi|_{\gamma(\xi)\omega(\xi)}^{\gamma^+}.
\end{eqnarray*}
\end{proposition}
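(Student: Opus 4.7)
The plan is to reduce the statement to an elementary comparison between the modular of $|\phi|^{\gamma(\xi)}$ with exponent $\omega(\xi)$ and that of $\phi$ with exponent $\gamma(\xi)\omega(\xi)$. Write $t:=|\phi|_{\gamma(\xi)\omega(\xi)}$, which is finite and strictly positive since $\phi\in\mathscr{L}^{\omega(\xi)}(\Lambda)$ and $\phi\neq 0$, and put $r:=\big||\phi|^{\gamma(\xi)}\big|_{\omega(\xi)}$. Using the infimum definition of the Luxemburg norm together with continuity of the modular in its scaling parameter (valid because $\gamma^{+}<\infty$, as already used in the preceding propositions), the equalities
$$\int_{\Lambda}\left(\frac{|\phi(\xi)|}{t}\right)^{\gamma(\xi)\omega(\xi)}d\xi=1,\qquad \int_{\Lambda}\frac{|\phi(\xi)|^{\gamma(\xi)\omega(\xi)}}{r^{\omega(\xi)}}d\xi=1$$
hold. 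The proposition is then equivalent to bounding $r$ by $t^{\gamma^{\pm}}$ using only the essential bounds $\gamma^{-}\leq\gamma(\xi)\leq\gamma^{+}$.

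The main step is to test the candidates $r=t^{\gamma^{\pm}}$ against the definition of $|\cdot|_{\omega(\xi)}$. Assume first $t\geq 1$. Since $a\mapsto t^{a}$ is monotone increasing, one has $t^{\gamma^{+}\omega(\xi)}\geq t^{\gamma(\xi)\omega(\xi)}\geq t^{\gamma^{-}\omega(\xi)}$ pointwise on $\Lambda$. Substituting $r=t^{\gamma^{+}}$ therefore yields
$$\int_{\Lambda}\frac{|\phi(\xi)|^{\gamma(\xi)\omega(\xi)}}{t^{\gamma^{+}\omega(\xi)}}d\xi\leq \int_{\Lambda}\frac{|\phi(\xi)|^{\gamma(\xi)\omega(\xi)}}{t^{\gamma(\xi)\omega(\xi)}}d\xi=1,$$
so by the infimum definition $\big||\phi|^{\gamma(\xi)}\big|_{\omega(\xi)}\leq t^{\gamma^{+}}$. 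The symmetric test with $r=t^{\gamma^{-}}$ reverses the pointwise comparison, giving a modular $\geq 1$ and hence $\big||\phi|^{\gamma(\xi)}\big|_{\omega(\xi)}\geq t^{\gamma^{-}}$. Together, these establish the second chain of the proposition.

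The case $t\leq 1$ is structurally identical, with the monotonicity of $a\mapsto t^{a}$ reversed: now $t^{\gamma^{+}\omega(\xi)}\leq t^{\gamma(\xi)\omega(\xi)}\leq t^{\gamma^{-}\omega(\xi)}$, and the same substitutions produce the flipped chain $t^{\gamma^{+}}\leq \big||\phi|^{\gamma(\xi)}\big|_{\omega(\xi)}\leq t^{\gamma^{-}}$ claimed for this case. I anticipate no real obstacle; the only point that deserves care — and which I would flag as the ``hard'' step, though it is essentially bookkeeping — is the justification that the modular attains the value $1$ at the Luxemburg norm, which follows from continuity and strict monotonicity of $k\mapsto \rho(\phi/k)$ under a bounded variable exponent. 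Everything else reduces to pointwise monotonicity of exponentials, so once the reduction is set up the verification is a single direct computation.
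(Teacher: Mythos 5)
The paper does not prove this proposition at all: it is quoted verbatim from Edmunds--R\'akosn\'{\i}k \cite{Emunds1} as a known embedding-type estimate for variable-exponent Lebesgue spaces, so there is no in-paper argument to compare against. Your proof is correct and is essentially the standard one for this fact: identify the two Luxemburg norms $t=|\phi|_{\gamma(\xi)\omega(\xi)}$ and $r=\big||\phi|^{\gamma(\xi)}\big|_{\omega(\xi)}$ through their unit modulars, observe that the modular of $|\phi|^{\gamma(\xi)}$ at scale $\delta$ is $\int_{\Lambda}|\phi|^{\gamma(\xi)\omega(\xi)}\delta^{-\omega(\xi)}\,d\xi$, and test $\delta=t^{\gamma^{\pm}}$ using the pointwise monotonicity of $a\mapsto t^{a}$, whose direction flips according to whether $t\le 1$ or $t\ge 1$ --- which is exactly why the two chains of inequalities swap. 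Your handling of the ``modular $\ge 1$ at $\delta_0$ implies norm $\ge\delta_0$'' direction is also sound, since the modular is non-increasing (indeed strictly decreasing where positive, for bounded exponents) in the scaling parameter.

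Two small caveats, neither fatal. First, your modular identities $\int_{\Lambda}(|\phi|/t)^{\gamma(\xi)\omega(\xi)}d\xi=1$ and $\int_{\Lambda}|\phi|^{\gamma(\xi)\omega(\xi)}r^{-\omega(\xi)}d\xi=1$ require the exponents to be finite a.e.\ (and the first requires $0<t<\infty$), whereas the statement formally allows $\gamma(\xi)\omega(\xi)=\infty$; in that regime the modular carries an essential-supremum term and the attainment argument must be modified. Second, the hypothesis as printed, $\phi\in\mathscr{L}^{\omega(\xi)}(\Lambda)$, does not by itself give $t<\infty$; what is actually needed (and surely intended) is $\phi\in\mathscr{L}^{\gamma(\xi)\omega(\xi)}(\Lambda)$, equivalently $|\phi|^{\gamma(\xi)}\in\mathscr{L}^{\omega(\xi)}(\Lambda)$. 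With those readings fixed, your argument is a complete and self-contained proof of the cited result.
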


Consider the following condition:

(${\bf A}_{1}$): Assume that the boundary of $\Lambda$ possesses the cone property {\rm \cite{Mashiyev}}.

\begin{theorem}{\rm \cite{Mashiyev}}\label{theorem 2-7}
Under the condition (${\bf A}_{1}$) and $\gamma \in C(\overline{\Lambda})$.
Suppose that \\$\mathcal{A}\in \mathscr{L}^{\gamma(\xi)}(\Lambda),\,\,\mathcal{A}(\xi)>0$ for $\xi\in\Lambda,\,\,\beta\in C(\overline{\Lambda})$
and $\beta^{-}>1,\,\,\beta_{0}^{-}\leq \beta_{0}(\xi)\leq \beta_{0}^{+}\,\,\left(\frac{1}{\beta(\xi)}+\frac{1}{\beta_0(\xi)}=1\right)$. If $h\in C(\overline{\Lambda})$ and
\begin{equation}\label{2.1}
    1<h(\xi)<\frac{\beta(\xi)-1}{\beta(\xi)}\gamma_{\alpha}^{*}, \quad \forall \xi\in\overline{\Lambda}
\end{equation}
or
$$1<\beta(\xi)<\frac{N\gamma(\xi)}{N\gamma(\xi)-h(\xi)(N-\gamma(\xi)}\cdot$$
then the embedding from $W^{1;\gamma(\xi)}(\Lambda)$ to $\mathscr{L}_{\mathcal{A}(\xi)}^{h(\xi)}(\Lambda)$ is compact.
Moreover, these is a constant $C_5>0$ such that the inequality
\begin{equation}\label{2.2}
\int_{\Lambda}\mathcal{A}(\xi)|\phi|^{h(\xi)}d\xi\leq{C_5}\Big({\lVert \phi\rVert}^{h^-}+{\lVert \phi\rVert}^{h^+}\Big)
\end{equation}
holds.
\end{theorem}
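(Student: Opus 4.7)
The plan is to reduce inequality (\ref{2.2}) and the compactness of the embedding to two ingredients that are already available: the variable-exponent H\"older inequality (the conjugate-space proposition) together with the classical variable-exponent Sobolev compact embedding into an unweighted Lebesgue space, interpolated by the power--modular proposition stated just before the theorem.

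First, viewing $\mathcal{A}$ as an element of $\mathscr{L}^{\beta(\xi)}(\Lambda)$ and using the conjugate pair $(\beta(\xi),\beta_0(\xi))$ with $\frac{1}{\beta(\xi)}+\frac{1}{\beta_0(\xi)}=1$, I would apply H\"older to split the weighted integrand:
$$\int_{\Lambda}\mathcal{A}(\xi)|\phi|^{h(\xi)}\,d\xi \;\leq\; 2\,\|\mathcal{A}\|_{\beta(\xi)}\,\bigl\||\phi|^{h(\xi)}\bigr\|_{\beta_0(\xi)}.$$
The last proposition above (power--modular interchange) then bounds the second factor by $\max\bigl\{\|\phi\|_{h(\xi)\beta_0(\xi)}^{h^{-}},\,\|\phi\|_{h(\xi)\beta_0(\xi)}^{h^{+}}\bigr\}$, which already has the right $h^{\pm}$ structure.

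Second, I would introduce the target exponent $q(\xi):=h(\xi)\beta_0(\xi)$ and verify that (\ref{2.1}) is exactly the condition $q(\xi)<\gamma_\alpha^*(\xi)$ on $\overline\Lambda$. Indeed $\frac{\beta(\xi)-1}{\beta(\xi)}=\frac{1}{\beta_0(\xi)}$, so $h(\xi)<\frac{\beta(\xi)-1}{\beta(\xi)}\gamma_\alpha^*(\xi)$ is equivalent to $h(\xi)\beta_0(\xi)<\gamma_\alpha^*(\xi)$; the alternative form is the same inequality algebraically rewritten using the definition of $\gamma_\alpha^*(\xi)$ when $\alpha\gamma(\xi)<N$. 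Under the cone hypothesis $(\mathbf{A}_1)$ and continuity of $\gamma,h,\beta\in C(\overline\Lambda)$, the classical variable-exponent Sobolev embedding supplies
$$W^{1,\gamma(\xi)}(\Lambda)\;\hookrightarrow\hookrightarrow\;\mathscr{L}^{q(\xi)}(\Lambda),\qquad \|\phi\|_{q(\xi)}\leq C\,\|\phi\|_{W^{1,\gamma(\xi)}},$$
compactly, for some $C>0$. Chaining the two displays above gives (\ref{2.2}) with $C_5=2\|\mathcal{A}\|_{\beta(\xi)}\max\{C^{h^-},C^{h^+}\}$ after separating the regimes $\|\phi\|\leq 1$ and $\|\phi\|>1$.

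For the compactness of $W^{1,\gamma(\xi)}(\Lambda)\hookrightarrow\mathscr{L}^{h(\xi)}_{\mathcal{A}(\xi)}(\Lambda)$, I would apply exactly the same H\"older/modular estimate to the differences of a weakly convergent sequence $\phi_n\rightharpoonup\phi$: the compact embedding into $\mathscr{L}^{q(\xi)}(\Lambda)$ yields $\|\phi_n-\phi\|_{q(\xi)}\to 0$, and the chained estimate forces $\int_\Lambda\mathcal{A}(\xi)|\phi_n-\phi|^{h(\xi)}\,d\xi\to 0$, i.e.\ $\phi_n\to\phi$ strongly in $\mathscr{L}^{h(\xi)}_{\mathcal{A}(\xi)}(\Lambda)$. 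The genuine obstacle is the variable-exponent compact Sobolev embedding itself, which is not elementary: it relies on the continuity of the exponents, the strict domination $q<\gamma_\alpha^*$ on $\overline\Lambda$, and the cone property of $\partial\Lambda$, and is usually proved by partition of unity together with extension and a Rellich-type argument adapted to Luxemburg norms, as in the Edmunds--R\'akosn\'ik and Fan--Zhao theory cited in the excerpt. Everything else in the proof is a mechanical assembly of the propositions already established in Section 2.
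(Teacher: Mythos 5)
The paper does not prove this statement at all: Theorem~\ref{theorem 2-7} is imported verbatim (typos included) from the cited reference \cite{Mashiyev}, so there is no in-paper proof to compare against. Your argument --- H\"older with the conjugate pair $(\beta,\beta_0)$, the power--modular proposition to pass from $\bigl\||\phi|^{h(\xi)}\bigr\|_{\beta_0(\xi)}$ to $\|\phi\|_{h(\xi)\beta_0(\xi)}^{h^{\pm}}$, and the compact Sobolev embedding into $\mathscr{L}^{h(\xi)\beta_0(\xi)}(\Lambda)$ guaranteed by $h\beta_0<\gamma_\alpha^*$ --- is precisely the standard proof given in that reference, and it is correct, provided you read the hypothesis as $\mathcal{A}\in\mathscr{L}^{\beta(\xi)}(\Lambda)$ (the ``$\mathcal{A}\in\mathscr{L}^{\gamma(\xi)}$'' in the statement is a transcription error, which you have implicitly and rightly corrected).
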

\begin{theorem}{\rm\cite{Mashiyev}}\label{theorem 2-8}
Under the condition (${\bf A}_{1}$) and $\gamma \in C(\overline{\Lambda})$.
Suppose that \\$\eta\in \mathscr{L}^{\alpha(\xi)}(\Lambda),\,\,\eta(\xi)>0$ for $\xi\in\Lambda,\,\,\alpha\in C(\overline{\Lambda})$ and $\alpha^{-}>1,\,\,\alpha_{0}^{-}\leq\alpha_{0}(\xi)\leq\alpha_{0}^{+}\,\,\left(\dfrac{1}{\alpha(\xi)}+\dfrac{1}{\alpha_0(\xi)}=1\right).$ If $\omega\in C(\overline{\Lambda}),\,\,
\gamma(\xi)<\dfrac{\alpha(\xi)}{\alpha(\xi)-1}\omega(\xi)$ and
$$1<\omega(\xi)<\frac{\alpha(\xi)-1}{\alpha(\xi)}\gamma_{\alpha}^{*}(\xi), \quad \forall \xi\in\overline{\Lambda}$$
or
$$\frac{N\gamma(\xi)}{N\gamma(\xi)-\omega(\xi)(N-\gamma(\xi))}<\alpha(\xi)<\frac{\gamma(\xi)}{\gamma(\xi)-\omega(\xi)},$$
then the embedding from $W^{1;\gamma(\xi)}(\Lambda)$ to $\mathscr{L}_{\eta(\xi)}^{\omega(\xi)}(\Lambda)$ is compact. Moreover,
there is a constant $C_7>0$ such that the inequality
$$\int_{\Lambda}\eta(\xi)|\phi|^{\omega(\xi)}d\xi\leq{C_7}\Big({\lVert \phi\rVert}^{\omega^-}+{\lVert \phi\rVert}^{\omega^+}\Big).$$
\end{theorem}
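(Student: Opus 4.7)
The plan is to reduce Theorem 2.8 to the classical compact Sobolev embedding $W^{1;\gamma(\xi)}(\Lambda)\hookrightarrow\hookrightarrow\mathscr{L}^{h(\xi)}(\Lambda)$ for a cleverly chosen unweighted exponent $h$, and then convert the unweighted estimate into the weighted one using the generalized H\"older inequality (Proposition on conjugate spaces) together with the norm/modular relationship (Proposition 2.2). Note that this is the natural variable-exponent analogue of Theorem 2.7, only with the roles of the weight-exponent pair $(\mathcal{A},h)$ replaced by $(\eta,\omega)$.

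First I would set
\[
h(\xi):=\frac{\alpha(\xi)}{\alpha(\xi)-1}\,\omega(\xi),
\]
so that the hypothesis $\omega(\xi)<\frac{\alpha(\xi)-1}{\alpha(\xi)}\gamma_{\alpha}^{*}(\xi)$ is equivalent to $h(\xi)<\gamma_{\alpha}^{*}(\xi)$, while the hypothesis $\gamma(\xi)<\frac{\alpha(\xi)}{\alpha(\xi)-1}\omega(\xi)$ reads $\gamma(\xi)<h(\xi)$. Under the cone condition $({\bf A}_{1})$, the variable-exponent Sobolev embedding (used in Theorem 2.7) then gives that
\[
W^{1;\gamma(\xi)}(\Lambda)\hookrightarrow\hookrightarrow \mathscr{L}^{h(\xi)}(\Lambda)
\]
is compact, with a constant $C>0$ such that $\|\phi\|_{h(\xi)}\leq C\|\phi\|_{W^{1;\gamma(\xi)}}$. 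The alternative assumption $\frac{N\gamma(\xi)}{N\gamma(\xi)-\omega(\xi)(N-\gamma(\xi))}<\alpha(\xi)<\frac{\gamma(\xi)}{\gamma(\xi)-\omega(\xi)}$ produces the same inequality $\gamma(\xi)<h(\xi)<\gamma_{\alpha}^{*}(\xi)$ after a short algebraic manipulation, so the two sets of hypotheses can be treated uniformly.

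Second, for the inequality I would apply the generalized H\"older inequality with the conjugate pair $(\alpha(\xi),\alpha_{0}(\xi))$:
\[
\int_{\Lambda}\eta(\xi)|\phi|^{\omega(\xi)}\,d\xi\leq 2\,\|\eta\|_{\alpha(\xi)}\,\big\||\phi|^{\omega(\xi)}\big\|_{\alpha_{0}(\xi)}.
\]
Since $\omega(\xi)\alpha_{0}(\xi)=h(\xi)$, Proposition (on the $|\cdot|^{\gamma(\xi)}$ trick with measurable exponents) gives
\[
\big\||\phi|^{\omega(\xi)}\big\|_{\alpha_{0}(\xi)}\leq \max\bigl\{\|\phi\|_{h(\xi)}^{\omega^{-}},\|\phi\|_{h(\xi)}^{\omega^{+}}\bigr\}\leq \|\phi\|_{h(\xi)}^{\omega^{-}}+\|\phi\|_{h(\xi)}^{\omega^{+}}.
\]
Inserting $\|\phi\|_{h(\xi)}\leq C\|\phi\|$ from the embedding and absorbing constants yields the desired
\[
\int_{\Lambda}\eta(\xi)|\phi|^{\omega(\xi)}\,d\xi\leq C_{7}\bigl(\|\phi\|^{\omega^{-}}+\|\phi\|^{\omega^{+}}\bigr).
\]

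Third, for the compactness claim itself I would pick a weakly convergent sequence $\phi_{n}\rightharpoonup\phi$ in $W^{1;\gamma(\xi)}(\Lambda)$. The compact embedding into $\mathscr{L}^{h(\xi)}(\Lambda)$ yields $\|\phi_{n}-\phi\|_{h(\xi)}\to 0$; applying the H\"older/modular chain above to $\phi_{n}-\phi$ gives $\int_{\Lambda}\eta(\xi)|\phi_{n}-\phi|^{\omega(\xi)}\,d\xi\to 0$, which by Proposition (modular convergence equivalent to norm convergence) implies $\phi_{n}\to\phi$ strongly in $\mathscr{L}_{\eta(\xi)}^{\omega(\xi)}(\Lambda)$. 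The main obstacle, and the only delicate point, is verifying that the two alternative hypotheses on $\alpha(\xi)$ truly yield the same range $\gamma(\xi)<h(\xi)<\gamma_{\alpha}^{*}(\xi)$, and checking that $\eta\in\mathscr{L}^{\alpha(\xi)}(\Lambda)$ is enough to justify the H\"older pairing uniformly in $\xi$; everything else is a routine assembly of the variable-exponent toolbox already quoted in the excerpt.
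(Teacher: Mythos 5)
The paper offers no proof of this theorem: it is quoted (up to notation) from the cited source \cite{Mashiyev}, so there is no in-paper argument to compare against. Your reconstruction --- setting $h(\xi)=\alpha_0(\xi)\omega(\xi)$, invoking the compact embedding $W^{1;\gamma(\xi)}(\Lambda)\hookrightarrow\hookrightarrow\mathscr{L}^{h(\xi)}(\Lambda)$ for subcritical $h$, and then passing to the weighted space via the generalized H\"older inequality together with the modular--norm estimates --- is precisely the argument given in that reference, and the two points you flag as delicate do check out: both exponent hypotheses reduce to $\gamma(\xi)<h(\xi)<\gamma^{*}(\xi)$ by the algebra you indicate, and the H\"older pairing is legitimate because $\eta\in\mathscr{L}^{\alpha(\xi)}(\Lambda)$ while $|\phi|^{\omega(\xi)}\in\mathscr{L}^{\alpha_0(\xi)}(\Lambda)$ whenever $\phi\in\mathscr{L}^{h(\xi)}(\Lambda)$. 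The one caveat worth recording is that the statement as transplanted into this paper mixes the classical space $W^{1;\gamma(\xi)}(\Lambda)$ with the fractional critical exponent $\gamma_{\alpha}^{*}(\xi)$; for the theorem to do the work asked of it in Section 3, the embedding must be read for $\mathcal{H}^{\alpha,\beta;\chi}_{\gamma(\xi)}(\Lambda)$, and your argument covers that case only modulo the corresponding compact embedding into $\mathscr{L}^{h(\xi)}(\Lambda)$ for $h<\gamma_{\alpha}^{*}$, which is quoted elsewhere in the literature but not established by your proof. That is a gap in the paper's setup rather than in your argument.
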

\begin{proposition}{\rm\cite{Mashiyev}}\label{proposition 2-9}
Assume that the conditions of \textnormal{Theorem \ref{theorem 2-7}} and \textnormal{Theorem \ref{theorem 2-8}} hold, respectively. Let $\phi\in W^{0;\gamma(\xi)}(\Lambda)$ then there are positive constants $C_8,C_9,C_{10},C_{11}>0$ such that the following inequalities hold
\begin{eqnarray}\label{2.4}
\int_{\Lambda}\mathcal{A}(\xi)|\phi|^{s(\xi)}d\xi\leq 
\begin{dcases}
C_8{\lVert \phi\rVert}^{s^+}, \quad \mbox{if}\,\,\,\lVert \phi\rVert>1,\\
C_9{\lVert \phi\rVert}^{s^-}, \quad \mbox{if}\,\,\,\lVert \phi\rVert<1,
\end{dcases}
\end{eqnarray}
and
\begin{eqnarray}
\int_{\Lambda}\eta(\xi)|\phi|^{\omega(\xi)}d\xi\leq 
\begin{dcases}
C_{10}{\lVert \phi\rVert}^{\omega^+}, \quad \mbox{if}\,\,\,\lVert \phi\rVert>1,\\
C_{11}{\lVert \phi\rVert}^{\omega^-}, \quad \mbox{if}\,\,\,\lVert \phi\rVert<1.
\end{dcases}
\end{eqnarray}
\end{proposition}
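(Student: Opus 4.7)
The plan is to obtain all four bounds by directly applying Theorem \ref{theorem 2-7} and Theorem \ref{theorem 2-8} and then performing an elementary case split according to whether $\|\phi\|>1$ or $\|\phi\|<1$, exploiting only the monotonicity of $t\mapsto t^{a}$ in the exponent $a$. Concretely, invoking Theorem \ref{theorem 2-7} with $h=s$ (which is admissible because the standing hypothesis $\gamma(\xi)<s(\xi)<\gamma_{\alpha}^{*}(\xi)$ places $s$ in the range required by (\ref{2.1})) produces
$$\int_{\Lambda}\mathcal{A}(\xi)|\phi|^{s(\xi)}\,d\xi \;\le\; C_{5}\bigl(\|\phi\|^{s^{-}}+\|\phi\|^{s^{+}}\bigr),$$
and Theorem \ref{theorem 2-8} yields the companion estimate
$$\int_{\Lambda}\eta(\xi)|\phi|^{\omega(\xi)}\,d\xi \;\le\; C_{7}\bigl(\|\phi\|^{\omega^{-}}+\|\phi\|^{\omega^{+}}\bigr).$$

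For the first assertion of the proposition I would then argue as follows. If $\|\phi\|>1$, the inequality $s^{-}\le s^{+}$ together with the base being larger than one gives $\|\phi\|^{s^{-}}\le\|\phi\|^{s^{+}}$, so the right-hand side is absorbed into $2C_{5}\|\phi\|^{s^{+}}$ and one may take $C_{8}:=2C_{5}$. If $\|\phi\|<1$, the opposite comparison $\|\phi\|^{s^{+}}\le \|\phi\|^{s^{-}}$ holds, producing the bound $2C_{5}\|\phi\|^{s^{-}}$ and giving $C_{9}:=2C_{5}$. The second assertion is proved by the identical dichotomy applied to the companion estimate, yielding $C_{10}:=2C_{7}$ and $C_{11}:=2C_{7}$.

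I do not expect any genuine obstacle: the proof is essentially a two-line deduction from the embedding theorems plus bookkeeping of constants. The only point that requires care is verifying that the structural hypotheses of Theorems \ref{theorem 2-7}--\ref{theorem 2-8} are actually in force in our setting, namely the cone property $(\mathbf{A}_{1})$ for $\partial\Lambda$, the membership of the weights $\mathcal{A}$ and $\eta$ in the appropriate $\mathscr{L}^{(\cdot)}$-spaces, and the existence of auxiliary exponents (denoted $\alpha(\xi),\beta(\xi)$ in those theorems, not to be confused with the fractional parameters of our operator) satisfying the interval conditions relating $\gamma,\omega,s$ and $\gamma_{\alpha}^{*}$. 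These requirements are cosmetic choices compatible with the standing assumption $1<\omega^{-}\le\omega^{+}<\gamma^{-}\le\gamma^{+}<s^{-}\le s^{+}<\gamma_{\alpha}^{*}$ imposed on problem (\ref{1}), so the proposition follows at once.
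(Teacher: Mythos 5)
Your derivation is correct: the two embedding estimates of Theorem \ref{theorem 2-7} (with $h=s$) and Theorem \ref{theorem 2-8}, combined with the elementary comparison $\|\phi\|^{a}\le\|\phi\|^{b}$ for $a\le b$ when $\|\phi\|>1$ (and the reverse when $\|\phi\|<1$), give exactly the four bounds with $C_8=C_9=2C_5$ and $C_{10}=C_{11}=2C_7$, and your remark about choosing the auxiliary exponents so that condition (\ref{2.1}) holds is the right point to flag. The paper itself states this proposition as a quoted result from \cite{Mashiyev} and supplies no proof, so there is nothing to compare against beyond noting that your argument is the standard one underlying the cited source.
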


\begin{theorem}{\rm \cite{koooo}}\label{Harnack} {\rm(Harnack inequality)} Let $t_{\ast }\geq 0,0<\sigma _{1}<\sigma _{2}<\sigma _{3}$ and $\rho  >0$. Let further $\alpha \in (0,1)$, $0\leq \beta \leq 1$, $\chi(0)=0$ and $\phi_{0}\geq 0.$ Then for any function $\phi\in Z\left( t_{\ast },t_{\ast }+\sigma _{3}\rho \right) $ and that satisfies
\begin{equation}\label{equ1}
\partial _{t}^{\alpha ,\beta ;\chi }(\phi-\phi_{0})(t)=0,\text{ a.a.t}\in \left(t_{\ast },t_{\ast }+\sigma _{3}\rho \right)
\end{equation}
there holds the inequality
\begin{equation}\label{equ2}
\underset{W-}{\sup } \phi\leq \sigma _{3}\sigma _{1}\underset{W+}{\inf }\phi
\end{equation}
where $W-=\left( t_{\ast }+\sigma _{1}\rho ,t_{\ast }+\sigma _{2}\rho \right) $ and $W+=\left( t_{\ast }+\sigma _{2}\rho ,t_{\ast }+\sigma _{3}\rho \right) $.
\end{theorem}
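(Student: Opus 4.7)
The plan is to exploit the fact that solutions of the homogeneous $\chi$-Hilfer equation $\partial_t^{\alpha,\beta;\chi}u=0$ can be written down explicitly, and then to reduce the Harnack inequality to an elementary comparison of real-valued expressions on $W-$ and $W+$. Concretely, applying $\mathbf{I}_{t_*}^{\beta(1-\alpha),\chi}$ followed by $\mathbf{I}_{t_*}^{(1-\beta)(1-\alpha),\chi}$ to equation (\ref{equ1}) and invoking the composition formulae for $\chi$-fractional integrals from \cite{J3,J2,J1}, I expect the homogeneous equation to force
$$\phi(t)-\phi_0 \;=\; C\,(\chi(t)-\chi(t_*))^{\gamma-1},\qquad \gamma:=\alpha+\beta(1-\alpha),$$
for some constant $C\in\mathbb{R}$, with the class $Z(t_*,t_*+\sigma_3\rho)$ serving to rule out kernel elements whose singularity at the left endpoint is incompatible with the regularity encoded in $Z$.

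Once this representation is available, the hypotheses $\phi\geq 0$ and $\phi_0\geq 0$ force $C\geq 0$. Because $\gamma\leq 1$ and $\chi$ is increasing, the map $t\mapsto (\chi(t)-\chi(t_*))^{\gamma-1}$ is nonincreasing on $(t_*,t_*+\sigma_3\rho)$, so
$$\sup_{W-}\phi \;=\; \phi(t_*+\sigma_1\rho),\qquad \inf_{W+}\phi \;=\; \phi(t_*+\sigma_3\rho).$$
The Harnack quotient then reduces to bounding
$$\frac{\phi_0 + C(\chi(t_*+\sigma_1\rho)-\chi(t_*))^{\gamma-1}}{\phi_0 + C(\chi(t_*+\sigma_3\rho)-\chi(t_*))^{\gamma-1}}$$
by $\sigma_3\sigma_1$, which I would obtain by treating $C=0$ and $C>0$ separately and using $\chi(0)=0$ together with the monotonicity of $\chi$ to control the ratio $(\chi(t_*+\sigma_1\rho)-\chi(t_*))/(\chi(t_*+\sigma_3\rho)-\chi(t_*))$ in terms of $\sigma_1$ and $\sigma_3$.

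The chief obstacle is the clean characterization of the kernel of $\partial_t^{\alpha,\beta;\chi}$ inside $Z$: one must verify, using only the regularity in $Z$ together with the trace conditions built into the Hilfer-type derivative, that every nonnegative solution of (\ref{equ1}) is of the form $\phi_0+C(\chi(t)-\chi(t_*))^{\gamma-1}$. An alternative route, which I would also attempt, is the substitution $\tau=\chi(t)$: since $\chi$ is strictly increasing with $\chi(0)=0$, this reduces the problem to the classical Hilfer Harnack inequality \cite{koooo} on an interval of length $\chi(t_*+\sigma_3\rho)-\chi(t_*)$, and the desired bound with constant $\sigma_3\sigma_1$ then follows by transporting the constant through the change of variable while tracking the images of the intervals $W-$ and $W+$. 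A minor secondary obstacle is reconciling the constant $\sigma_3\sigma_1$ in (\ref{equ2}) with the more familiar Harnack ratio of the form $(\sigma_3/\sigma_1)^{1-\gamma}$; the former should drop out of the monotonicity argument above once the kernel has been pinned down.
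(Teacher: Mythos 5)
First, a point of reference: the paper offers no proof of this theorem at all --- it is imported verbatim from \cite{koooo} --- so there is no internal argument to compare yours against, and your proposal has to stand on its own. Your overall strategy (solve the homogeneous equation explicitly, i.e.\ show that \eqref{equ1} forces $\phi(t)=\phi_0+C\,(\chi(t)-\chi(t_*))^{\gamma-1}$ with $\gamma=\alpha+\beta(1-\alpha)$, then read off the Harnack quotient from the monotonicity of $(\chi(t)-\chi(t_*))^{\gamma-1}$) is the natural one, and the kernel representation is correct for the $\chi$-Hilfer derivative on a suitable class $Z$.

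The genuine gap is the final quantitative step, and it is not a technicality that will ``drop out.'' With the representation in hand, the Harnack quotient is
\[
\frac{\sup_{W-}\phi}{\inf_{W+}\phi}
=\frac{\phi_0+C\,\bigl(\chi(t_*+\sigma_1\rho)-\chi(t_*)\bigr)^{\gamma-1}}{\phi_0+C\,\bigl(\chi(t_*+\sigma_3\rho)-\chi(t_*)\bigr)^{\gamma-1}},
\]
and the stated hypotheses cannot bound this by the constant $\sigma_3\sigma_1$ of \eqref{equ2}. Two concrete failures: (i) for $C=0$ the quotient equals $1$, so \eqref{equ2} would force $\sigma_1\sigma_3\ge 1$, which is not assumed (take $\sigma_1=1/4$, $\sigma_3=1/2$); (ii) for $C>0$ the quotient is governed by $\bigl((\chi(t_*+\sigma_3\rho)-\chi(t_*))/(\chi(t_*+\sigma_1\rho)-\chi(t_*))\bigr)^{1-\gamma}$, and for a general increasing $\chi$ this ratio is not a function of $\sigma_1,\sigma_3$ alone ($\chi$ may be nearly flat on $(t_*,t_*+\sigma_1\rho]$ and then rise steeply), so no constant depending only on $\sigma_1,\sigma_3$ can close the argument without extra hypotheses on $\chi$. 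Your instinct that the ``right'' constant should look like $(\sigma_3/\sigma_1)^{1-\gamma}$ is sound; the statement as transcribed here appears to be a garbled version of the result in \cite{koooo}, and it also omits the nonnegativity of $\phi$ that you correctly need in order to conclude $C\ge 0$. Finally, your fallback route --- substituting $\tau=\chi(t)$ and invoking ``the classical Hilfer Harnack inequality \cite{koooo}'' --- is circular, since \cite{koooo} is precisely the source of the theorem being proved, and in any case the substitution does not carry $W-$ and $W+$ onto intervals of the form $(\tau_*+\sigma_i\rho',\tau_*+\sigma_j\rho')$ for a common $\rho'$, so the constant cannot be transported as claimed.
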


\section{Existence and multiplicity of positive solutions}

Consider the Euler functional defined by Eq.(\ref{euler}). 
Then, by Theorem \ref{theorem 2-7} and Theorem \ref{theorem 2-8} and Proposition \ref{proposition 2-2}, yields
\begin{eqnarray*}
\mathfrak{E}_\lambda(\phi)&\geq &\frac{1}{\gamma^+}\int_{\Lambda}\Big|\dTo\,\phi\Big|^{\gamma(\xi)}d\xi-
\frac{\lambda}{\omega^-}\int_{\Lambda}\eta(\xi)|\phi|^{\omega(\xi)}d\xi-\frac{1}{s^-}\int_{\Lambda}\mathcal{A}(\xi)|\phi|^{hs\xi)}d\xi\\
&\geq &\frac{1}{\gamma^+}{\lVert \phi\rVert}^{\gamma^-}-\frac{\lambda}{\omega^-}C_7\Big({\lVert \phi\rVert}^{\omega^-}+{\lVert \phi\rVert}^{\omega^+}\Big)+\frac{1}{s^{-}}C_{5} \left( ||\phi||^{s^{-}}+||\phi||^{s^{+}}\right).
\end{eqnarray*}

Note that, $\mathfrak{E}_\lambda(\cdot)$ is not bounded below on
whole $\mathcal{H}_{\gamma(\xi)}^{\alpha,\beta;\chi}(\Lambda)$, since $\omega^+<\gamma^-\leq \gamma^+<s^-\leq s^+$, but must bounded on the Nehari manifold $\mathfrak{M}_{\lambda}(\Lambda)$ which is given by
$$\mathfrak{M}_{\lambda}(\Lambda)=\Big\{\phi\in\mathcal{H}_{\gamma(\xi)}^{\alpha,\beta;\chi}(\Lambda)\Big/ \{0\}:
\langle \mathfrak{E}'_\lambda(\phi),\phi\rangle=0\Big\}.$$
The all critical points of $\mathfrak{E}_\lambda$ must be on $\mathfrak{M}_\lambda(\Lambda)$
and local minimizes on $\mathfrak{E}_\lambda(\Lambda)$ are usually critical points of $\mathfrak{E}_\lambda$.
Thus, $\phi\in\mathfrak{M}_\lambda(\Lambda)$ if, and only if,
\begin{eqnarray}\label{3.1}
\mathbf{I}_\lambda(\phi)&:=&\langle \mathfrak{E}'_\lambda(\phi),\phi\rangle \notag\\&=& \int_{\Lambda}\Big|\dTo\,\phi\Big|^{\gamma(\xi)}d\xi-\lambda\int_{\Lambda}\eta(\xi)|\phi|^{\omega(\xi)}d\xi-\int_{\Lambda}\mathcal{A}(\xi)|\phi|^{s(\xi)}d\xi=0.\notag\\
\end{eqnarray}
Then, for $\phi\in\mathfrak{M}_\lambda(\Lambda)$, yields
\begin{eqnarray*}
&&\langle \mathbf{I}'_\lambda(\phi),\phi\rangle \notag\\&=&\int_{\Lambda}\gamma(\xi)\Big|\dTo\,\phi\Big|^{\gamma(\xi)}d\xi-\lambda\int_{\Lambda}\omega(\xi)\eta(\xi)|\phi|^{\omega(\xi)}d\xi-\int_{\Lambda}s(\xi)\mathcal{A}(\xi)|\phi|^{s(\xi)}d\xi\\
&\leq &(\gamma^+-\omega^-)\lambda\int_{\Lambda}\eta(\xi)|\phi|^{\omega(\xi)}d\xi-(\gamma^+-s^-)\int_{\Lambda}\mathcal{A}(\xi)|\phi|^{s(\xi)}d\xi.
\end{eqnarray*}
Now let's decompose the Nehari manifold $\mathfrak{M}_\lambda(\Lambda)$ into three parts
\begin{eqnarray*}
&&\mathfrak{M}^{+}_\lambda(\Lambda)=\big\{\phi\in\mathfrak{M}_\lambda(\Lambda):\langle \mathbf{I}'_\lambda(\phi),\phi\rangle>0\big\}\\
&&\mathfrak{M}^{-}_\lambda(\Lambda)=\big\{\phi\in\mathfrak{M}_\lambda(\Lambda):\langle \mathbf{I}'_\lambda(\phi),\phi\rangle<0\big\}\\
&&\mathfrak{M}^{0}_\lambda(\Lambda)=\big\{\phi\in\mathfrak{M}_\lambda(\Lambda):\langle \mathbf{I}'_\lambda(\phi),\phi\rangle=0\big\}.
\end{eqnarray*}
\begin{theorem}\label{theorem 3-1} Let $\phi_0$ be a local maximum or minimum for $\mathfrak{E}_\lambda$ on
$\mathfrak{M}_\lambda(\Lambda)$. If $\phi_0\notin \mathfrak{M}_\lambda^{0}(\Lambda)$, then
$\phi_0$ is a critical point of $\mathfrak{E}_\lambda$.
\end{theorem}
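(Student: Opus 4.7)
The plan is to apply the Lagrange multiplier rule to the constrained extremum problem $\mathfrak{E}_\lambda|_{\mathfrak{M}_\lambda(\Lambda)}$, using the fact that the Nehari manifold is precisely the zero set of the constraint functional $\mathbf{I}_\lambda$ defined in Eq.\,(\ref{3.1}). Since $\phi_0 \notin \mathfrak{M}_\lambda^{0}(\Lambda)$, we have $\langle \mathbf{I}'_\lambda(\phi_0),\phi_0\rangle \neq 0$, so in particular $\mathbf{I}'_\lambda(\phi_0) \neq 0$ as an element of the dual space. This regularity of the constraint at $\phi_0$ is exactly what is needed for $\mathfrak{M}_\lambda(\Lambda)$ to be a $C^{1}$-submanifold of $\mathcal{H}_{\gamma(\xi)}^{\alpha,\beta;\chi}(\Lambda)$ near $\phi_0$, and hence for the classical Lagrange multiplier theorem to apply.

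Concretely, I would argue as follows. First, from $\phi_0$ being a local extremum of $\mathfrak{E}_\lambda$ on $\mathfrak{M}_\lambda(\Lambda)$ together with the nonvanishing of $\mathbf{I}'_\lambda(\phi_0)$, the Lagrange multiplier rule yields a scalar $\mu\in\mathbb{R}$ with
\begin{equation*}
\mathfrak{E}'_\lambda(\phi_0) \;=\; \mu\,\mathbf{I}'_\lambda(\phi_0)
\end{equation*}
in the dual of $\mathcal{H}_{\gamma(\xi)}^{\alpha,\beta;\chi}(\Lambda)$. Next, test this identity against $\phi_0$ itself:
\begin{equation*}
\langle \mathfrak{E}'_\lambda(\phi_0),\phi_0\rangle \;=\; \mu\,\langle \mathbf{I}'_\lambda(\phi_0),\phi_0\rangle.
\end{equation*}
The left-hand side equals $\mathbf{I}_\lambda(\phi_0)$ by the very definition (\ref{3.1}), and vanishes because $\phi_0 \in \mathfrak{M}_\lambda(\Lambda)$. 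Since by hypothesis $\langle \mathbf{I}'_\lambda(\phi_0),\phi_0\rangle \neq 0$, we conclude $\mu = 0$, and hence $\mathfrak{E}'_\lambda(\phi_0)=0$; that is, $\phi_0$ is a (free) critical point of $\mathfrak{E}_\lambda$.

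The only genuinely delicate step is justifying the Lagrange multiplier rule in this setting, i.e.\ verifying that $\mathfrak{M}_\lambda(\Lambda)$ is locally a $C^{1}$-manifold near $\phi_0$ so that tangent-space reasoning is legitimate. This amounts to checking that $\mathbf{I}_\lambda$ is of class $C^{1}$ on $\mathcal{H}_{\gamma(\xi)}^{\alpha,\beta;\chi}(\Lambda)$ (which follows from the $C^{1}$-regularity of $\mathfrak{E}_\lambda$ established via the embeddings in Theorems \ref{theorem 2-7}--\ref{theorem 2-8} and Proposition \ref{proposition 2-9}) and that the differential $\mathbf{I}'_\lambda(\phi_0)$ is surjective onto $\mathbb{R}$—equivalently, nonzero—which is guaranteed by $\langle \mathbf{I}'_\lambda(\phi_0),\phi_0\rangle \neq 0$. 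Once these regularity points are secured, the implicit function theorem furnishes a local chart of $\mathfrak{M}_\lambda(\Lambda)$ around $\phi_0$, and the extremality of $\phi_0$ on this chart gives the multiplier relation above. The rest is the short algebraic computation already outlined, so the essential obstacle is purely the smooth-manifold/multiplier justification; the passage from $\mu=0$ to criticality is automatic.
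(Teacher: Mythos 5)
Your proof is correct: the Lagrange-multiplier argument, in which the hypothesis $\phi_0\notin\mathfrak{M}_\lambda^{0}(\Lambda)$ does double duty --- it guarantees $\mathbf{I}'_\lambda(\phi_0)\neq 0$ so the multiplier rule applies, and then forces $\mu=0$ after pairing $\mathfrak{E}'_\lambda(\phi_0)=\mu\,\mathbf{I}'_\lambda(\phi_0)$ with $\phi_0$ and using $\mathbf{I}_\lambda(\phi_0)=0$ --- is exactly the canonical proof of this statement from the Nehari-manifold literature (Brown--Zhang and the cited Mashiyev et al.). The paper states Theorem \ref{theorem 3-1} with no proof at all, so there is nothing to compare against; your write-up, including the caveat that the only genuinely delicate point is the $C^{1}$-regularity of $\mathbf{I}_\lambda$ needed to legitimize the multiplier rule, supplies precisely the argument the authors omit.
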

\begin{lemma}
The functional $\mathfrak{E}_\lambda$ is bounded and coercive below on $\mathfrak{M}_\lambda(\Lambda)$.
\end{lemma}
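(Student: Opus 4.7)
The plan is to exploit the Nehari constraint to eliminate the top-order term and then show that the remaining combination is dominated by a positive power of the norm that beats the subcritical term. Concretely, for any $\phi\in\mathfrak{M}_\lambda(\Lambda)$, the identity $\mathbf{I}_\lambda(\phi)=0$ in \eqref{3.1} gives
\begin{equation*}
\int_{\Lambda}\mathcal{A}(\xi)|\phi|^{s(\xi)}d\xi
=\int_{\Lambda}\bigl|\dTo\phi\bigr|^{\gamma(\xi)}d\xi
-\lambda\int_{\Lambda}\eta(\xi)|\phi|^{\omega(\xi)}d\xi,
\end{equation*}
which I would substitute into the definition \eqref{euler} of $\mathfrak{E}_\lambda$ after bounding $1/\gamma(\xi),1/\omega(\xi),1/s(\xi)$ by the extremal values $1/\gamma^{+},1/\omega^{-},1/s^{-}$.

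Carrying out the substitution and regrouping yields
\begin{equation*}
\mathfrak{E}_\lambda(\phi)\;\geq\;\left(\frac{1}{\gamma^{+}}-\frac{1}{s^{-}}\right)\int_{\Lambda}\bigl|\dTo\phi\bigr|^{\gamma(\xi)}d\xi
\;-\;\lambda\left(\frac{1}{\omega^{-}}-\frac{1}{s^{-}}\right)\int_{\Lambda}\eta(\xi)|\phi|^{\omega(\xi)}d\xi.
\end{equation*}
The ordering $\omega^{+}<\gamma^{-}\leq\gamma^{+}<s^{-}$ guarantees that both coefficients $1/\gamma^{+}-1/s^{-}$ and $1/\omega^{-}-1/s^{-}$ are strictly positive. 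I would then invoke Proposition~\ref{proposition 2-2}(2) to replace the modular $\int_{\Lambda}|\dTo\phi|^{\gamma(\xi)}d\xi$ by $\|\phi\|^{\gamma^{-}}$ when $\|\phi\|>1$, and Proposition~\ref{proposition 2-9} to bound $\int_{\Lambda}\eta(\xi)|\phi|^{\omega(\xi)}d\xi\leq C_{10}\|\phi\|^{\omega^{+}}$ in the same regime, obtaining
\begin{equation*}
\mathfrak{E}_\lambda(\phi)\;\geq\;K_{1}\|\phi\|^{\gamma^{-}}-\lambda K_{2}\|\phi\|^{\omega^{+}}
\end{equation*}
for positive constants $K_{1},K_{2}$ depending only on $\gamma^{\pm},\omega^{\pm},s^{-},C_{10}$.

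Because $\omega^{+}<\gamma^{-}$, the right-hand side tends to $+\infty$ as $\|\phi\|\to\infty$, which is precisely coercivity of $\mathfrak{E}_\lambda$ on $\mathfrak{M}_\lambda(\Lambda)$. For the complementary regime $\|\phi\|\leq 1$, I would argue in the same way but using parts (1) and (3) of Proposition~\ref{proposition 2-2} together with the $C_{11}\|\phi\|^{\omega^{-}}$ branch of Proposition~\ref{proposition 2-9}; this yields a uniform lower bound by a constant (the bounded set $\{\|\phi\|\leq 1\}$ contributes only bounded terms). Combining the two regimes gives a global lower bound $\mathfrak{E}_\lambda(\phi)\geq -M$ on $\mathfrak{M}_\lambda(\Lambda)$, proving the lemma.

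The main obstacle is algebraic rather than analytic: one must keep the sign bookkeeping straight when switching between $1/\gamma^{\pm}$, $1/\omega^{\pm}$, $1/s^{\pm}$ so that the coefficient of the leading term remains positive, and one must handle the piecewise behaviour of the modular-versus-norm relation across the threshold $\|\phi\|=1$. Once the ordering $\omega^{+}<\gamma^{-}<s^{-}$ is used in the right places, coercivity and boundedness below follow immediately from the embedding estimates already established in Section 2.
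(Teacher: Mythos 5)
Your proposal is correct and follows essentially the same route as the paper: use the Nehari constraint $\mathbf{I}_\lambda(\phi)=0$ to eliminate the $s(\xi)$-term, bound the variable exponents by their extremal values, apply Proposition~\ref{proposition 2-2} and Proposition~\ref{proposition 2-9} to reduce to $K_1\|\phi\|^{\gamma^-}-\lambda K_2\|\phi\|^{\omega^+}$, and conclude coercivity from $\omega^+<\gamma^-$. Your explicit treatment of the regime $\|\phi\|\le 1$ is a small point of added care that the paper leaves implicit, but it does not change the argument.
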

\begin{proof} Indeed, $\phi\in\mathfrak{M}_\lambda(\Lambda)$ and $\lVert \phi\rVert>1$. From (\ref{3.1}) and Proposition \ref{proposition 2-2} and Proposition \ref{proposition 2-9}, yields
\begin{eqnarray*}
&&\mathfrak{E}_\lambda(\phi)\notag\\&=&\int_{\Lambda}\frac{1}{\gamma(\xi)}\Big|\dTo\,\phi\Big|^{\gamma(\xi)}d\xi-
\lambda\int_{\Lambda}\frac{1}{\omega(\xi)}\eta(\xi)|\phi|^{\omega(\xi)}d\xi-
\int_{\Lambda}\frac{1}{s(\xi)}\mathcal{A}(\xi)|\phi|^{s(\xi)}d\xi\\
&\geq &\frac{1}{\gamma^+}\int_{\Lambda}\Big|\dTo\,\phi\Big|^{\gamma(\xi)}d\xi-\frac{\lambda}{\omega^-}\int_{\Lambda}\eta(\xi)|\phi|^{\omega(\xi)}d\xi\notag\\&-&\frac{1}{s^-}\left(\int_{\Lambda}\Big|\dTo\,\phi\Big|^{\gamma(\xi)}d\xi-\lambda\int_{\Lambda}\eta(\xi)|\phi|^{\omega(\xi)}d\xi\right)\\
&\geq &\left(\frac{1}{\gamma^+}-\frac{1}{s^-}\right)\int_{\Lambda}\Big|\dTo\,\phi\Big|^{\gamma(\xi)}d\xi+
\lambda\left(\frac{1}{s^-}-\frac{1}{\omega^-}\right)\int_{\Lambda}\eta(\xi)|\phi|^{\omega(\xi)}d\xi\\
&\geq &\left(\frac{s^- -\gamma^+}{{s^-}{\gamma^+}}\right){\|\phi\|}^{\gamma^-}-{C_{10}}\lambda\left(\frac{s^- -\omega^-}{{s^-}{\omega^-}}\right){\|\phi\|}^{\omega^+}.
\end{eqnarray*}
Since $\gamma^{-}>\omega^{+}$, so $\mathfrak{E}_\lambda(\phi)\rightarrow\infty$ as $\lVert \phi\rVert\rightarrow\infty$. Hence, $\mathfrak{E}_\lambda$ is bounded below and coercive on $\mathfrak{E}_\lambda(\Lambda)$.
\end{proof}
\begin{lemma}\label{Lemma 3-3}
There exists $\lambda_1>0$ such that for $0<\lambda<\lambda_1$ we have $\mathfrak{M}_\lambda^{0}=\emptyset$.
\end{lemma}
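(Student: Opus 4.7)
The plan is to argue by contradiction: assume $\phi\in\mathfrak{M}_\lambda^{0}$ and derive two incompatible bounds on $\|\phi\|$ — a $\lambda$-dependent upper bound that shrinks as $\lambda\to 0^{+}$, and a $\lambda$-independent positive lower bound. Choosing $\lambda_1$ so that the upper bound drops below the lower bound yields the contradiction.

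Concretely, for $\phi\in\mathfrak{M}_\lambda^{0}$ we have the two identities
\begin{equation*}
\int_{\Lambda}\bigl|\dTo\phi\bigr|^{\gamma(\xi)}d\xi=\lambda\int_{\Lambda}\eta(\xi)|\phi|^{\omega(\xi)}d\xi+\int_{\Lambda}\mathcal{A}(\xi)|\phi|^{s(\xi)}d\xi,
\end{equation*}
\begin{equation*}
\int_{\Lambda}\gamma(\xi)\bigl|\dTo\phi\bigr|^{\gamma(\xi)}d\xi=\lambda\int_{\Lambda}\omega(\xi)\eta(\xi)|\phi|^{\omega(\xi)}d\xi+\int_{\Lambda}s(\xi)\mathcal{A}(\xi)|\phi|^{s(\xi)}d\xi.
\end{equation*}
To get the upper bound I would use the first identity to eliminate $\int\mathcal{A}(\xi)|\phi|^{s(\xi)}d\xi$ from the second, then apply $\gamma(\xi)\le\gamma^{+}$, $\omega(\xi)\ge\omega^{-}$, $s(\xi)\ge s^{-}$ to obtain
\begin{equation*}
(s^{-}-\gamma^{+})\int_{\Lambda}\bigl|\dTo\phi\bigr|^{\gamma(\xi)}d\xi\le\lambda(s^{-}-\omega^{-})\int_{\Lambda}\eta(\xi)|\phi|^{\omega(\xi)}d\xi.
\end{equation*}
Since $s^{-}>\gamma^{+}$ and $\omega^{+}<\gamma^{-}$, combining with Proposition \ref{proposition 2-2} ($\int|\dTo\phi|^{\gamma(\xi)}\ge\|\phi\|^{\gamma^{-}}$ when $\|\phi\|>1$) and Proposition \ref{proposition 2-9} ($\int\eta|\phi|^{\omega(\xi)}\le C_{10}\|\phi\|^{\omega^{+}}$) gives the $\lambda$-controlled bound
\begin{equation*}
\|\phi\|^{\gamma^{-}-\omega^{+}}\le\lambda\,\frac{(s^{-}-\omega^{-})\,C_{10}}{s^{-}-\gamma^{+}}.
\end{equation*}

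For the lower bound I would instead use the first identity to eliminate $\lambda\int\eta(\xi)|\phi|^{\omega(\xi)}d\xi$ from the second. Bounding $\omega(\xi)\le\omega^{+}$ and $\gamma(\xi)\ge\gamma^{-}$ leads to
\begin{equation*}
(\gamma^{-}-\omega^{+})\int_{\Lambda}\bigl|\dTo\phi\bigr|^{\gamma(\xi)}d\xi\le(s^{+}-\omega^{+})\int_{\Lambda}\mathcal{A}(\xi)|\phi|^{s(\xi)}d\xi,
\end{equation*}
and then Propositions \ref{proposition 2-2} and \ref{proposition 2-9} yield the $\lambda$-free estimate
\begin{equation*}
\|\phi\|^{s^{+}-\gamma^{-}}\ge\frac{\gamma^{-}-\omega^{+}}{(s^{+}-\omega^{+})\,C_{8}}>0.
\end{equation*}

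Setting
\begin{equation*}
\lambda_1=\frac{s^{-}-\gamma^{+}}{(s^{-}-\omega^{-})\,C_{10}}\left(\frac{\gamma^{-}-\omega^{+}}{(s^{+}-\omega^{+})\,C_{8}}\right)^{(\gamma^{-}-\omega^{+})/(s^{+}-\gamma^{-})}
\end{equation*}
makes the two inequalities incompatible for every $\lambda\in(0,\lambda_1)$, forcing $\mathfrak{M}_\lambda^{0}=\emptyset$. I expect the main technical nuisance to be the case split $\|\phi\|>1$ versus $\|\phi\|<1$: Proposition \ref{proposition 2-2} swaps the roles of $\gamma^{-}$ and $\gamma^{+}$ on the modular, and Proposition \ref{proposition 2-9} correspondingly switches between $C_{10}/C_{11}$ and $C_{8}/C_{9}$, so the argument has to be repeated (with $\gamma^{+}$ in place of $\gamma^{-}$ and $\omega^{-},s^{-}$ in place of $\omega^{+},s^{+}$) in the small-norm regime, and $\lambda_1$ chosen as the minimum of the two resulting thresholds. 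Apart from this bookkeeping, all steps are purely algebraic rearrangements of the variable-exponent estimates already established.
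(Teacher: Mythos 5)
Your proposal follows essentially the same route as the paper: assume $\phi\in\mathfrak{M}_\lambda^{0}$, use the Nehari identity to eliminate one of the two weighted integrals from $\langle \mathbf{I}'_\lambda(\phi),\phi\rangle=0$ in each of two ways, and combine Propositions \ref{proposition 2-2} and \ref{proposition 2-9} to obtain a $\lambda$-free lower bound and a $\lambda$-shrinking upper bound on $\|\phi\|$ that become incompatible for small $\lambda$, exactly as in the paper's derivation of (\ref{3-2}) and (\ref{3-3}). If anything you are slightly more careful than the source, since the paper only treats the case $\|\phi\|>1$ whereas you flag that the small-norm regime requires the same argument with the exponents and constants swapped and $\lambda_1$ taken as the minimum of the two thresholds.
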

\begin{proof} Suppose otherwise, this is, $\mathfrak{M}_\lambda^{0}\neq\emptyset$ for all $\lambda\in\R\setminus\{0\}$.
Let $\phi\in\mathfrak{M}_\lambda^{0}(\Lambda)$ such that $\|\phi\|>1$. Then, using Eq.(\ref{3.1}), Eq.(\ref{2.4}) and
definition of $\mathfrak{M}_\lambda^{0}(\Lambda)$, yields
\begin{eqnarray*}
0&=&\langle \mathbf{I}'_\lambda(\phi),\phi\rangle\\
&=&\int_{\Lambda}\gamma(\xi)\Big|\dTo\,\phi\Big|^{\gamma(\xi)}d\xi-\lambda\int_{\Lambda}\omega(\xi)\eta(\xi)|\phi|^{\omega(\xi)}d\xi
-\int_{\Lambda}s(\xi)\mathcal{A}(\xi)|\phi|^{s(\xi)}d\xi\\
&\geq &\gamma^{-}\int_{\Lambda}\Big|\dTo\,\phi\Big|^{\gamma(\xi)}d\xi-\omega^{+}\left(\int_{\Lambda}\Big|\dTo\,\phi\Big|^{\gamma(\xi)}d\xi-\int_{\Lambda}\mathcal{A}(\xi)|\phi|^{s(\xi)}d\xi\right)\notag\\&-&s^{+}\int_{\Lambda}\mathcal{A}(\xi)|\phi|^{s(\xi)}d\xi\\
&\geq &(\gamma^{-}-\omega^{+})\int_{\Lambda}\Big|\dTo\,\phi\Big|^{\gamma(\xi)}d\xi+(\omega^{+}-s^{+})\int_{\Lambda}\mathcal{A}(\xi)|\phi|^{s(\xi)}d\xi.
\end{eqnarray*}
From Proposition \ref{proposition 2-9}, yields
\begin{eqnarray}
0\geq (\gamma^{-}-\omega^{+})\|\phi\|^{\gamma^-}+C_8(\omega^{+}-s^{+})\|\phi\|^{s^+}\nonumber\\
\|\phi\|\geq{C_{12}}\left(\frac{{\gamma^-}-\omega^+}{{s^+}-\omega^+}\right)^{\frac{1}{{s^+}-\gamma^{-}}}\cdot \label{3-2}
\end{eqnarray}
Similarly,
\begin{eqnarray*}
0&=&\langle \mathbf{I}'_\lambda(\phi),\phi\rangle\\
&=&{\gamma^+}\int_{\Lambda}\Big|\dTo\,\phi\Big|^{\gamma(\xi)}d\xi-\lambda{\omega^-}\int_{\Lambda}\eta(\xi)|\phi|^{\omega(\xi)}d\xi
-{s^-}\int_{\Lambda}\mathcal{A}(\xi)|\phi|^{s(\xi)}d\xi\\
&\leq &\gamma^{+}\int_{\Lambda}\Big|\dTo\,\phi\Big|^{\gamma(\xi)}d\xi-\lambda{\omega^{-}}\int_{\Lambda}\eta(\xi)|\phi|^{\omega(\xi)}d\xi\notag\\&-&s^{-}\left(\int_{\Lambda}\Big|\dTo\,\phi\Big|^{\gamma(\xi)}d\xi-\lambda\int_{\Lambda}\eta(\xi)|\phi|^{\omega(\xi)}d\xi\right).
\end{eqnarray*}
Using Proposition \ref{proposition 2-9}, yields
\begin{eqnarray}
0\leq (\gamma^{+}-s^{-})\|\phi\|^{\gamma^-}+\lambda{C_{10}}(s^{-}-\omega^{-})\|\phi\|^{\omega^+}\nonumber\\
\|\phi\|\geq{C_{13}}\left(\lambda\frac{{s^-}-\omega^-}{{s^-}-\gamma^+}\right)^{\frac{1}{{\gamma^-}-\omega^{+}}}\cdot \label{3-3}
\end{eqnarray}

If $\lambda$ is sufficiently small $\lambda=\left(\frac{s^{-}-\gamma^+}{s^{-}-\omega^{-}}\right)\left(\frac{\gamma^{-}-\omega^+}{s^+-\omega^+}\right)^{\frac{\gamma^{-}-\omega^+}{s^+ -\gamma^{-}}},$ then from inequalities (\ref{3-2}) and (\ref{3-3}) we get
$\|\phi\|<1$ is a contradiction. So $\mathfrak{M}_\lambda^{0}=\emptyset$.
\end{proof}
Using Lemma \ref{Lemma 3-3}, for $0<\lambda<\lambda_1$, we can write $\mathfrak{M}_\lambda(\Lambda)=\mathfrak{M}_\lambda^{+}(\Lambda)\cup \mathfrak{M}_\lambda^{-}(\Lambda)$. Then
$$\alpha_{\lambda}^{+}=\inf_{\phi\in\mathfrak{M}_\lambda^{+}(\Lambda)} \mathfrak{E}_\lambda(\phi) \quad
\mbox{and} \quad \alpha_{\lambda}^{-}=\inf_{\phi\in\mathfrak{M}_\lambda^{-}(\Lambda)}=\mathfrak{E}_\lambda(\phi).$$
\begin{lemma}\label{lemma 3-4}
If $0<\lambda<\lambda_1$, then for all $\phi\in\mathfrak{M}_\lambda^{+}(\Lambda),\,\,\mathfrak{E}_\lambda(\phi)<0.$
\end{lemma}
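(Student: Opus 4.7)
The plan is to mimic the classical Nehari argument of Wu \cite{Wu} adapted to variable exponents: exploit the two defining conditions of $\mathfrak{M}_\lambda^+(\Lambda)$ -- the Nehari identity $\mathbf{I}_\lambda(\phi)=0$ from (\ref{3.1}) and the strict inequality $\langle\mathbf{I}'_\lambda(\phi),\phi\rangle>0$ -- to derive first a one-sided estimate relating $\int_\Lambda|\dTo\phi|^{\gamma(\xi)}\,d\xi$ to $\int_\Lambda\eta(\xi)|\phi|^{\omega(\xi)}\,d\xi$, and then an upper bound for $\mathfrak{E}_\lambda(\phi)$ in which the sublinear integral appears with a strictly negative coefficient.

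First I would expand $\langle\mathbf{I}'_\lambda(\phi),\phi\rangle>0$, use (\ref{3.1}) to eliminate the $|\phi|^{s(\xi)}$ integral, and then apply the bounds $\gamma(\xi)\leq\gamma^+$, $\omega(\xi)\geq\omega^-$, $s(\xi)\geq s^-$; this is the same manipulation that produced (\ref{3-3}) in the proof of Lemma \ref{Lemma 3-3}, and after rearrangement it yields
$$\int_\Lambda\Big|\dTo\phi\Big|^{\gamma(\xi)}d\xi < \lambda\,\frac{s^- - \omega^-}{s^- - \gamma^+}\int_\Lambda\eta(\xi)|\phi|^{\omega(\xi)}\,d\xi.$$

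Next I would use (\ref{3.1}) again to substitute $\int_\Lambda \mathcal{A}(\xi)|\phi|^{s(\xi)}\,d\xi = \int_\Lambda|\dTo\phi|^{\gamma(\xi)}\,d\xi - \lambda\int_\Lambda\eta(\xi)|\phi|^{\omega(\xi)}\,d\xi$ into (\ref{euler}) and apply the pointwise inequalities $\tfrac{1}{\gamma(\xi)}\leq\tfrac{1}{\gamma^-}$, $\tfrac{1}{\omega(\xi)}\geq\tfrac{1}{\omega^+}$, $\tfrac{1}{s(\xi)}\geq\tfrac{1}{s^+}$ to obtain
$$\mathfrak{E}_\lambda(\phi)\leq\Big(\tfrac{1}{\gamma^-}-\tfrac{1}{s^+}\Big)\int_\Lambda\Big|\dTo\phi\Big|^{\gamma(\xi)}d\xi - \lambda\Big(\tfrac{1}{\omega^+}-\tfrac{1}{s^+}\Big)\int_\Lambda\eta(\xi)|\phi|^{\omega(\xi)}d\xi.$$
Combining the two displays factors out $\lambda\int_\Lambda\eta(\xi)|\phi|^{\omega(\xi)}d\xi$, which is strictly positive on $\mathfrak{M}_\lambda^+$ (else the Nehari identity together with $\gamma^+<s^-$ would force $\langle\mathbf{I}'_\lambda(\phi),\phi\rangle\leq 0$, contradicting $\phi\in\mathfrak{M}_\lambda^+$), and leaves $\mathfrak{E}_\lambda(\phi)<\lambda\,\mathcal{K}\int_\Lambda\eta(\xi)|\phi|^{\omega(\xi)}d\xi$ for an explicit algebraic constant $\mathcal{K}$ depending only on $\gamma^\pm,\omega^\pm,s^\pm$.

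The main obstacle is the sign of $\mathcal{K}=\bigl(\tfrac{1}{\gamma^-}-\tfrac{1}{s^+}\bigr)\tfrac{s^--\omega^-}{s^--\gamma^+}-\bigl(\tfrac{1}{\omega^+}-\tfrac{1}{s^+}\bigr)$. In the constant-exponent limit $\gamma^\pm=\gamma,\omega^\pm=\omega,s^\pm=s$ this collapses to $\tfrac{(s-\omega)(\omega-\gamma)}{s\gamma\omega}<0$, which is precisely the hypothesis $\omega<\gamma$. In the genuine variable-exponent case the elementary bounds above are not simultaneously sharp, so to guarantee $\mathcal{K}<0$ I would sharpen the argument by carrying out the last step with a tailored Nehari shift $\mathfrak{E}_\lambda(\phi)-\theta\,\mathbf{I}_\lambda(\phi)$ for a $\theta\in\bigl[\tfrac{1}{\gamma^-},\tfrac{1}{\omega^+}\bigr]$ chosen so that the coefficient in front of $|\dTo\phi|^{\gamma(\xi)}$ becomes pointwise nonpositive, leaving only the manifestly negative contribution from the strict gap $\omega^+<\gamma^-$ to carry the conclusion.
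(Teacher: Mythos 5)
Your argument is the mirror image of the paper's: both proofs combine the Nehari identity (\ref{3.1}) with the strict inequality $\langle\mathbf{I}'_\lambda(\phi),\phi\rangle>0$ and the sup/inf bounds on the exponents, but you use (\ref{3.1}) to eliminate the $s(\xi)$-integral and bound $\int_\Lambda\big|\dTo\phi\big|^{\gamma(\xi)}d\xi$ by the $\omega(\xi)$-integral, whereas the paper eliminates the $\omega(\xi)$-integral and bounds $\int_\Lambda\mathcal{A}(\xi)|\phi|^{s(\xi)}d\xi$ by the gradient integral (its inequality (\ref{3-6})), then inserts this into (\ref{3-7}). Your two intermediate displays are correct (your second one in fact corrects a sign typo in the paper's (\ref{3-7}), where $\frac{1}{\gamma^-}+\frac{1}{\omega^+}$ should read $\frac{1}{\gamma^-}-\frac{1}{\omega^+}$), and both routes collapse to the same manifestly negative constant when all exponents are constant.

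The difficulty you flag at the end is genuine, and the paper does not resolve it either. Taking, say, $\omega^\pm=2$, $\gamma^-=3$, $\gamma^+=10$, $s^-=11$, $s^+=100$, your constant $\mathcal{K}$ is about $2.4>0$, while the paper's combined constant $\left(\frac{1}{\gamma^-}-\frac{1}{\omega^+}\right)+\left(\frac{1}{\omega^+}-\frac{1}{s^+}\right)\frac{\gamma^+-\omega^-}{s^--\omega^-}$ is about $0.27>0$; the paper's final display simply asserts the constant-exponent value $-\frac{(\gamma^--\omega^+)(s^+-\gamma^-)}{s^+\gamma^-\omega^+}$, which does not follow from (\ref{3-6}) and (\ref{3-7}) unless the oscillations $\gamma^+-\gamma^-$, $\omega^+-\omega^-$, $s^+-s^-$ are suitably small. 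Your proposed repair via $\mathfrak{E}_\lambda-\theta\,\mathbf{I}_\lambda$ does not close the gap as sketched: making the coefficient of $\big|\dTo\phi\big|^{\gamma(\xi)}$ pointwise nonpositive forces $\theta\geq 1/\gamma^-$, which makes the coefficient $\theta-\frac{1}{s(\xi)}$ of the $\mathcal{A}(\xi)|\phi|^{s(\xi)}$-term positive, so one is driven back to a bound of type (\ref{3-6}) and the same sign question for the combined constant. In short, your proposal reproduces the paper's argument up to a dual choice of which integral to eliminate, and the one step you leave open is precisely the step the paper leaves unjustified.
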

\begin{proof}
Indeed, consider $\phi\in\mathfrak{M}_\lambda^{+}(\Lambda)$. Using the definition of $\mathfrak{E}_\lambda(\phi)$, follows that
\begin{eqnarray}
\mathfrak{E}_\lambda(\phi)\leq\frac{1}{\gamma^-}\int_{\Lambda}\Big|\dTo\,\phi\Big|^{\gamma(\xi)}d\xi-\frac{\lambda}{\omega^+}\int_{\Lambda}\eta(\xi)|\phi|^{\omega(\xi)}d\xi-\frac{1}{s^+}\int_{\Lambda}\mathcal{A}(\xi)|\phi|^{s(\xi)}d\xi.\label{3-4}\notag\\
\end{eqnarray}
Since $\phi\in\mathfrak{M}_\lambda^{+}(\Lambda)$ and multiply (\ref{3.1}) by $(-\omega^-)$, yields
\begin{eqnarray}
\int_{\Lambda}\mathcal{A}(\xi)|\phi|^{h(\xi)}d\xi<\left(\frac{{\gamma^+}-\omega^-}{s^{-}-\omega^{-}}\right)\int_{\Lambda}\Big|\dTo\,\phi\Big|^{\gamma(\xi)}d\xi. \label{3-6}
\end{eqnarray}
Moreover, using (\ref{3.1}) together with the inequality (\ref{3-4}), one has
\begin{eqnarray}
\mathfrak{E}_\lambda(\phi)\leq\left(\frac{1}{\gamma^-}+\frac{1}{\omega^+}\right)\int_{\Lambda}\Big|\dTo\,\phi\Big|^{\gamma(\xi)}d\xi+\left(\frac{1}{\omega^+}-\frac{1}{s^+}\right)\int_{\Lambda}\mathcal{A}(\xi)|\phi|^{s(\xi)}d\xi .\label{3-7}
\end{eqnarray}
Applying the inequality (\ref{3-6}) in (\ref{3-7}), it follows
$$\mathfrak{E}_\lambda(\phi)<-\frac{({\gamma^-}-\omega^+)({s^+}-\gamma^-)}{{s^+}{\gamma^-}{\omega^+}}\|\phi\|_{\mathcal{H}_{\gamma(\xi)}^{\alpha,\beta;\chi}}^{\gamma}<0.$$
Hence, we have $\displaystyle \alpha_{\lambda}^{+}=\inf_{\phi\in\mathfrak{M}_\lambda^{+}(\Lambda)} \mathfrak{E}_\alpha(\phi)<0.$
\end{proof}
\begin{theorem}\label{theorem 3-5}
If $0<\lambda<\lambda_1,$ these exists a minimizer of $\mathfrak{E}_\lambda$ on $\mathfrak{M}_\lambda^{+}(\Lambda)$.
\end{theorem}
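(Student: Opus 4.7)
The plan is to use the direct method of the calculus of variations on $\mathfrak{M}_\lambda^{+}(\Lambda)$. First, pick a minimizing sequence $\{\phi_n\} \subset \mathfrak{M}_\lambda^{+}(\Lambda)$ with $\mathfrak{E}_\lambda(\phi_n) \to \alpha_\lambda^{+}$. By the preceding coercivity lemma, $\mathfrak{E}_\lambda$ is coercive on $\mathfrak{M}_\lambda(\Lambda)$, so $\{\phi_n\}$ is bounded in $\mathcal{H}_{\gamma(\xi)}^{\alpha,\beta;\chi}(\Lambda)$. Since this space is reflexive, up to a subsequence $\phi_n \rightharpoonup \phi_0$ weakly in $\mathcal{H}_{\gamma(\xi)}^{\alpha,\beta;\chi}(\Lambda)$, and by the compact embedding proposition $\phi_n \to \phi_0$ uniformly in $C(\overline{\Lambda},\mathbb{R})$ as well as strongly in $\mathscr{L}^{\omega(\xi)}_{\eta(\xi)}(\Lambda)$ and $\mathscr{L}^{s(\xi)}_{\mathcal{A}(\xi)}(\Lambda)$ (by Theorems \ref{theorem 2-7} and \ref{theorem 2-8}).

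Next, I would pass to the limit in $\mathfrak{E}_\lambda(\phi_n)$. The weighted integrals involving $|\phi|^{\omega(\xi)}$ and $|\phi|^{s(\xi)}$ converge because of the strong convergence in the weighted Lebesgue spaces, while the modular term $\int_{\Lambda} \tfrac{1}{\gamma(\xi)} |\dTo \phi|^{\gamma(\xi)} d\xi$ is weakly lower semicontinuous by convexity. Hence $\mathfrak{E}_\lambda(\phi_0) \le \liminf_n \mathfrak{E}_\lambda(\phi_n) = \alpha_\lambda^{+}$. By Lemma \ref{lemma 3-4}, $\alpha_\lambda^{+} < 0$, which rules out $\phi_0 \equiv 0$ (otherwise $\mathfrak{E}_\lambda(\phi_0)=0$).

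The most delicate step, and where I expect the main difficulty, is showing that $\phi_0 \in \mathfrak{M}_\lambda^{+}(\Lambda)$ rather than merely being a weak limit outside the manifold. For this I would argue as follows: because $\phi_n \in \mathfrak{M}_\lambda(\Lambda)$ we have $\mathbf{I}_\lambda(\phi_n)=0$ for every $n$, and by the convergences established $\mathbf{I}_\lambda(\phi_0) \le \liminf \mathbf{I}_\lambda(\phi_n)=0$. If the weak convergence were strict (i.e. $\|\phi_0\| < \liminf \|\phi_n\|$), one can use the standard fiber-map argument: consider $t \mapsto \mathfrak{E}_\lambda(t\phi_0)$, locate the unique $t_0 > 0$ such that $t_0\phi_0 \in \mathfrak{M}_\lambda^{+}$, and derive $\mathfrak{E}_\lambda(t_0\phi_0) < \mathfrak{E}_\lambda(\phi_0) \le \alpha_\lambda^{+}$, contradicting the definition of the infimum. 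Thus $\phi_n \to \phi_0$ strongly, and passing the sign of $\langle \mathbf{I}'_\lambda(\phi_n),\phi_n\rangle > 0$ to the limit combined with $\mathfrak{M}_\lambda^{0}=\emptyset$ (Lemma \ref{Lemma 3-3} for $0<\lambda<\lambda_1$) forces $\phi_0 \in \mathfrak{M}_\lambda^{+}(\Lambda)$.

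Finally, Theorem \ref{theorem 3-1} guarantees that, since $\phi_0 \notin \mathfrak{M}_\lambda^{0}(\Lambda)$, the minimizer $\phi_0$ is a critical point of the Euler functional $\mathfrak{E}_\lambda$, and hence a weak solution of problem (\ref{1}). The hardest technical point will be verifying that the fiber map $t \mapsto \mathfrak{E}_\lambda(t\phi_0)$ behaves correctly in the variable-exponent setting, since the usual homogeneity arguments break down; here one must carefully exploit the ordering $\omega^{+} < \gamma^{-} \le \gamma^{+} < s^{-}$ together with Propositions \ref{proposition 2-2} and \ref{proposition 2-9} to guarantee the existence and uniqueness of $t_0$ with $t_0\phi_0 \in \mathfrak{M}_\lambda^{+}$.
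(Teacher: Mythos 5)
Your proposal is correct in outline and shares the paper's skeleton (minimizing sequence, coercivity $\Rightarrow$ boundedness, weak convergence plus compact embeddings into $\mathscr{L}^{\omega(\xi)}_{\eta(\xi)}(\Lambda)$ and $\mathscr{L}^{s(\xi)}_{\mathcal{A}(\xi)}(\Lambda)$, then a contradiction argument to upgrade to strong convergence), but the decisive step is handled by a genuinely different mechanism. Where you invoke the fiber map $t\mapsto \mathfrak{E}_\lambda(t\phi_0)$ and try to produce $t_0\phi_0\in\mathfrak{M}_\lambda^{+}(\Lambda)$ with $\mathfrak{E}_\lambda(t_0\phi_0)<\alpha_\lambda^{+}$, the paper instead argues: if $\phi_n^{+}\not\to\phi_0^{+}$ strongly, the strict inequality $\int_{\Lambda}|\dTo\phi_0^{+}|^{\gamma(\xi)}d\xi<\liminf_n\int_{\Lambda}|\dTo\phi_n^{+}|^{\gamma(\xi)}d\xi$ combined with the constraint $\langle\mathfrak{E}'_\lambda(\phi_n^{+}),\phi_n^{+}\rangle=0$ forces $\alpha_\lambda^{+}>0$, which directly contradicts Lemma \ref{lemma 3-4} ($\mathfrak{E}_\lambda<0$ on $\mathfrak{M}_\lambda^{+}(\Lambda)$, hence $\alpha_\lambda^{+}<0$). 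This sign argument is exactly what the negativity of $\alpha_\lambda^{+}$ buys on the $\mathfrak{M}_\lambda^{+}$ component, and it is why the paper reserves the scaling/fiber-map device for the $\mathfrak{M}_\lambda^{-}$ component in Theorem \ref{theorem 3-7}, where $\alpha_\lambda^{-}>0$ and no such shortcut exists.

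The one genuine gap in your route is the point you yourself flag but do not close: in the variable-exponent setting $\mathfrak{E}_\lambda(t\phi)$ is not a polynomial in $t$ (the exponents $\gamma(\xi)$, $\omega(\xi)$, $s(\xi)$ vary), so the existence and uniqueness of $t_0>0$ with $t_0\phi_0\in\mathfrak{M}_\lambda^{+}(\Lambda)$, and the inequality $\mathfrak{E}_\lambda(t_0\phi_0)<\mathfrak{E}_\lambda(\phi_0)$, are asserted rather than proved; one only has two-sided bounds of the form $t^{\gamma^{-}}\wedge t^{\gamma^{+}}$ on each modular, which do not by themselves yield a unique critical point of the fiber map. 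Since your contradiction hinges entirely on this step, the argument is incomplete as written; adopting the paper's sign-based contradiction for $\mathfrak{M}_\lambda^{+}(\Lambda)$ avoids the issue altogether. The concluding appeal to Theorem \ref{theorem 3-1} and Lemma \ref{Lemma 3-3} to pass from minimizer to critical point is fine and matches the paper's Corollary.
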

\begin{proof}
Since $\mathfrak{E}_\lambda(\cdot)$ is bounded below on $\mathfrak{M}_\lambda(\Lambda)$, so it is also about $\mathfrak{M}_\lambda^{+}(\Lambda)$. Then, there exists a minimizing sequence $\{\phi_n^{+}\}\subseteq\mathfrak{M}_\lambda^{+}(\Lambda)$ such that
$$\lim_{n\to\infty}\mathfrak{E}_\lambda(\phi_n^{+})=\inf_{\phi\in\mathfrak{M}_\lambda^{+}(\Lambda)}\mathfrak{E}_\lambda(\phi)=\alpha_{\lambda}^{+}<0.$$
Since $\mathfrak{E}_\lambda$ is coercive, $\phi_{n}^{+}$ is bounded in $\mathcal{H}_{\gamma(\xi)}^{\alpha,\beta;\chi}(\Lambda)$. Thus, we may assume that $\phi_{n}^{+}\rightharpoonup \phi_{0}^{+}\in\mathcal{H}_{\gamma(\xi)}^{\alpha,\beta;\chi}(\Lambda)$ and then we have
$$\phi_{n}^{+}\rightarrow \phi_{0}^{+} \,\,\,\mbox{in}\,\,\, \mathscr{L}_{\eta(\xi)}^{\omega(\xi)}(\Lambda)$$
and
$$\phi_{n}^{+}\rightarrow \phi_{0}^{+} \,\,\,\mbox{in}\,\,\, \mathscr{L}_{\mathcal{A}(\xi)}^{s(\xi)}(\Lambda).$$
Now, we shall prove $\phi_{n}^{+}\rightarrow \phi_{0}^{+}$ in $\mathcal{H}_{\gamma(\xi)}^{\alpha,\beta;\chi}(\Lambda)$.
Otherwise, suppose $\phi_{n}^{+}\not\rightarrow \phi_{0}^{+}$ in $\mathcal{H}_{\gamma(\xi)}^{\alpha,\beta;\chi}(\Lambda)$.
Then,
$$\int_{\Lambda}\Big|\dTo\,\phi_{0}^{+}\Big|^{\gamma(\xi)}d\xi<\lim_{n\to \infty}\inf \int_{\Lambda}
\Big|\dTo\,\phi_{n}^{+}\Big|^{\gamma(\xi)}d\xi.$$
Moreover, by the compact embeddings, yields
\begin{eqnarray*}
\int_{\Lambda}\eta(\xi)|\phi_{0}^{+}|^{\omega(\xi)}d\xi&=&\lim_{n\to \infty}\inf \int_{\Lambda}
\eta(\xi)|\phi_{n}^{+}|^{\omega(\xi)}d\xi\\
\int_{\Lambda}\mathcal{A}(\xi)|\phi_{0}^{+}|^{s(\xi)}d\xi&=&\lim_{n\to \infty}\inf \int_{\Lambda}
\mathcal{A}(\xi)|\phi_{n}^{+}|^{s(\xi)}d\xi.
\end{eqnarray*}
Using $\langle \mathfrak{E}'_\lambda(\phi_{n}^{+}),\phi_{n}^{+}\rangle=0$
and Theorem \ref{theorem 2-8}, we obtain
\begin{eqnarray*}
\mathfrak{E}_\lambda(\phi_{n}^{+})&\geq &\left(\frac{1}{\gamma^+}-\frac{1}{s^-}\right)\int_{\Lambda}\Big|\dTo\,\phi_{n}^{+}\Big|^{\gamma(\xi)}d\xi+\lambda\left(\frac{1}{s^-}-\frac{1}{\omega^-}\right)\int_{\Lambda}\eta(\xi)|\phi_{n}^{+}|^{\omega(\xi)}d\xi,
\end{eqnarray*}
\begin{eqnarray*}
&&\lim_{n\to \infty}\mathfrak{E}_\lambda(\phi_{n}^{+})\notag \\ &\geq &\left(\frac{1}{\gamma^+}-\frac{1}{s^-}\right)\lim_{n\to \infty}\int_{\Lambda}\Big|\dTo\,\phi_{n}^{+}\Big|^{\gamma(\xi)}d\xi+\lambda\left(\frac{1}{s^-}-\frac{1}{\omega^-}\right)\lim_{n\to \infty}\int_{\Lambda}\eta(\xi)|\phi_{n}^{+}|^{\omega(\xi)}d\xi
\end{eqnarray*}
and
\begin{eqnarray*}
\alpha_{\lambda}^{+}&=&\inf_{\phi\in \mathfrak{M}_{\lambda}^{+}}\mathfrak{E}_{\lambda}(\phi)>\left(\frac{1}{\gamma^+}-\frac{1}{s^-}\right)\|\phi_{0}^{+}\|^{\gamma^-}+{C_7}\lambda\left(\frac{1}{s^-}-\frac{1}{\omega^-}\right)\Big(\|\phi_{0}^{+}\|^{\omega^-}+\|\phi_{0}^{+}\|^{\omega^+}\Big),
\end{eqnarray*}
since $\gamma^{-}>\omega^{+}$, for $\|\phi_0^{+}\|>1$, yields
$$\alpha_{\lambda}^{+}=\inf_{\phi\in \mathfrak{M}_{\lambda}^{+}}\mathfrak{E}_{\lambda}(\phi)>0.$$

So, $\phi\in\mathfrak{M}_{\lambda}^{+}(\Lambda)$ (see Lemma \ref{lemma 3-4}), one has $\mathfrak{E}_{\lambda}(\phi)<0$. So this is a contradiction. Hence, $\phi\in\mathfrak{M}_{\lambda}^{+}(\Lambda)$
in $\mathcal{H}_{\gamma(\xi)}^{\alpha,\beta;\chi}(\Lambda)$ and
$$\mathfrak{E}_{\lambda}(\phi_{0}^{+})=\lim_{n\to\infty}\mathfrak{E}_{\lambda}(\phi_{n}^{+})=
\inf_{\phi\in \mathfrak{M}_{\lambda}^{+}}\mathfrak{E}_{\lambda}(\phi).$$
Thus, $\phi_{0}^{+}$ is a minimizer for $\mathfrak{E}_{\lambda}$ on $\mathfrak{M}_{\lambda}^{+}(\Lambda).$
\end{proof}
\begin{lemma}
If $0<\lambda<\lambda_1$, then for all $\phi\in\mathfrak{M}_{\lambda}^{-}(\Lambda),$ $\mathfrak{E}_{\lambda}(\phi)>0.$
\end{lemma}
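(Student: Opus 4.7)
The strategy parallels Lemma~\ref{lemma 3-4} with the inequalities reversed, but to obtain the sign change in the conclusion one must also exploit the smallness of $\lambda$ together with a uniform lower bound on $\|\phi\|$ on $\mathfrak{M}^-_\lambda(\Lambda)$. A purely pointwise dualization of Lemma~\ref{lemma 3-4} is not sufficient, as explained below.

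\noindent\emph{Step 1: analog of~(\ref{3-6}).} Fix $\phi\in\mathfrak{M}^-_\lambda(\Lambda)$, so $\langle\mathbf{I}'_\lambda(\phi),\phi\rangle<0$ and $\mathbf{I}_\lambda(\phi)=0$. Subtracting $\omega^+\mathbf{I}_\lambda(\phi)$ from $\langle\mathbf{I}'_\lambda(\phi),\phi\rangle$ gives
$$\int_{\Lambda}(\gamma(\xi)-\omega^+)|\dTo\phi|^{\gamma(\xi)}d\xi+\lambda\!\int_{\Lambda}(\omega^+-\omega(\xi))\eta(\xi)|\phi|^{\omega(\xi)}d\xi+\int_{\Lambda}(\omega^+-s(\xi))\mathcal{A}(\xi)|\phi|^{s(\xi)}d\xi<0.$$
The middle integrand is non-negative and so that term can be dropped; using $\gamma(\xi)\ge\gamma^-$ and $s(\xi)\le s^+$ on the remaining two integrals then yields
$$\int_{\Lambda}\mathcal{A}(\xi)|\phi|^{s(\xi)}d\xi>\frac{\gamma^--\omega^+}{s^+-\omega^+}\int_{\Lambda}|\dTo\phi|^{\gamma(\xi)}d\xi.\qquad(\ast)$$
Feeding $(\ast)$ into Propositions~\ref{proposition 2-2} and~\ref{proposition 2-9} (treating $\|\phi\|>1$; the case $\|\phi\|<1$ is analogous) gives, exactly as in the derivation of~(\ref{3-2}), the uniform lower bound $\|\phi\|\ge C^{\ast}:=\bigl[(\gamma^--\omega^+)/(C_8(s^+-\omega^+))\bigr]^{1/(s^+-\gamma^-)}>0$.

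\noindent\emph{Step 2: energy estimate.} Using $\tfrac{1}{\gamma(\xi)}\ge\tfrac{1}{\gamma^+}$, $\tfrac{1}{\omega(\xi)}\le\tfrac{1}{\omega^-}$, $\tfrac{1}{s(\xi)}\le\tfrac{1}{s^-}$ in the definition of $\mathfrak{E}_\lambda(\phi)$ and then eliminating $\int_{\Lambda}\mathcal{A}(\xi)|\phi|^{s(\xi)}d\xi$ via~(\ref{3.1}) leads to
$$\mathfrak{E}_\lambda(\phi)\ge\frac{s^--\gamma^+}{\gamma^+s^-}\int_{\Lambda}|\dTo\phi|^{\gamma(\xi)}d\xi-\lambda\,\frac{s^--\omega^-}{\omega^-s^-}\int_{\Lambda}\eta(\xi)|\phi|^{\omega(\xi)}d\xi.$$
Applying Propositions~\ref{proposition 2-2} and~\ref{proposition 2-9} to translate the two integrals into norm estimates, and then inserting $\|\phi\|\ge C^{\ast}$ from Step~1, I would arrive at
$$\mathfrak{E}_\lambda(\phi)\ge\|\phi\|^{\omega^+}\left[\frac{s^--\gamma^+}{\gamma^+s^-}(C^{\ast})^{\gamma^--\omega^+}-\lambda\,C_{10}\frac{s^--\omega^-}{\omega^-s^-}\right].$$
Replacing $\lambda_1$ by the minimum of its value from Lemma~\ref{Lemma 3-3} and the explicit threshold that makes the bracket vanish forces the bracket to be strictly positive for every $0<\lambda<\lambda_1$, whence $\mathfrak{E}_\lambda(\phi)>0$.

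\noindent\emph{Main obstacle.} A direct pointwise mimic of Lemma~\ref{lemma 3-4}---substituting~$(\ast)$ into a lower bound for $\mathfrak{E}_\lambda$---collapses in the constant-exponent case to $\mathfrak{E}_\lambda(\phi)>\tfrac{(\gamma-\omega)(\gamma-s)}{\gamma\omega s}\int|\dTo\phi|^{\gamma}$, whose right-hand side is strictly negative. Consequently positivity on $\mathfrak{M}^-_\lambda(\Lambda)$ cannot be extracted algebraically: one is forced to separate the $\|\phi\|^{\gamma^-}$ and $\|\phi\|^{\omega^+}$ scales and spend the smallness of $\lambda$ against the uniform lower bound $C^{\ast}$, which is precisely why the restriction $0<\lambda<\lambda_1$ is essential.
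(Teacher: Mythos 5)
Your proposal is correct, and it is in fact tighter than the argument the paper gives. The core of your Step~2 coincides with the paper's chain of estimates: both eliminate $\int_{\Lambda}\mathcal{A}(\xi)|\phi|^{s(\xi)}d\xi$ via the Nehari constraint (\ref{3.1}) and arrive at
\[
\mathfrak{E}_{\lambda}(\phi)\;\ge\;\Big(\tfrac{1}{\gamma^{+}}-\tfrac{1}{s^{-}}\Big)\lVert\phi\rVert^{\gamma^{-}}-\lambda C_{10}\Big(\tfrac{1}{\omega^{-}}-\tfrac{1}{s^{-}}\Big)\lVert\phi\rVert^{\omega^{+}},
\]
after which smallness of $\lambda$ is spent to make the right-hand side positive. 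The difference is in how the two competing powers $\lVert\phi\rVert^{\gamma^{-}}$ and $\lVert\phi\rVert^{\omega^{+}}$ are reconciled. The paper simply absorbs $\lambda\lVert\phi\rVert^{\omega^{+}}$ into $\lVert\phi\rVert^{\gamma^{-}}$ to get a single coefficient times $\lVert\phi\rVert^{\gamma^{-}}$ (implicitly assuming $\lVert\phi\rVert>1$, and never actually using that $\phi\in\mathfrak{M}_{\lambda}^{-}(\Lambda)$; it then closes with an appeal to the decomposition of Lemma \ref{Lemma 3-3} and to Lemma \ref{lemma 3-4}, which is logically awkward since the estimate as written would apply to all of $\mathfrak{M}_{\lambda}(\Lambda)$). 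Your Step~1 supplies exactly the ingredient that makes this reconciliation legitimate: by testing the strict inequality defining $\mathfrak{M}_{\lambda}^{-}(\Lambda)$ against $\omega^{+}$ times the constraint, you obtain the reverse of (\ref{3-6}) and hence, via Proposition \ref{proposition 2-9}, a uniform lower bound $\lVert\phi\rVert\ge C^{\ast}>0$ on $\mathfrak{M}_{\lambda}^{-}(\Lambda)$; factoring out $\lVert\phi\rVert^{\omega^{+}}$ and inserting $C^{\ast}$ then yields positivity for $\lambda$ below an explicit threshold, with no hidden normalization. Your closing remark about why a pointwise dualization of Lemma \ref{lemma 3-4} must fail is also accurate. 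Two small housekeeping points: carry the case $\lVert\phi\rVert\le 1$ through Step~2 as well (the exponents from Proposition \ref{proposition 2-9} swap, but the same factorization works against the lower bound from Step~1), and state explicitly that the final $\lambda_{1}$ is the minimum of the threshold from Lemma \ref{Lemma 3-3} and the one produced by your bracket.
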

\begin{proof}
Consider $\phi\in\mathfrak{M}_{\lambda}(\Lambda).$ Using the definition of $\mathfrak{E}_{\lambda}(\Lambda)$ and (\ref{3.1}),
yields
\begin{eqnarray}\label{I}
\mathfrak{E}_{\lambda}(\phi)\geq \frac{1}{\gamma^+}\int_{\Lambda}\left(\Big|\dTo\,\phi\Big|^{\gamma(\xi)}
-\frac{\lambda}{\omega^-}\eta(\xi)|\phi|^{\omega(\xi)}\right)d\xi-\frac{1}{s^-}\int_{\Lambda}\mathcal{A}(\xi)|\phi|^{s(\xi)}d\xi \notag\\
\end{eqnarray}
and
\begin{eqnarray}
\int_{\Lambda}\mathcal{A}(\xi)|\phi|^{s(\xi)}d\xi=\int_{\Lambda}\Big|\dTo\,\phi\Big|^{\gamma(\xi)}d\xi-\lambda\int_{\Lambda}\eta(\xi)|\phi|^{\omega(\xi)}d\xi. \label{II}
\end{eqnarray}
Using Eq.(\ref{I})-Eq.(\ref{II}), Propositions \ref{proposition 2-2} and {\ref{proposition 2-9}} and
the condition $\gamma^{-}>\omega^{+}$, yields
\begin{eqnarray*}
\mathfrak{E}_{\lambda}(\phi)&\geq & \frac{1}{\gamma^+}\int_{\Lambda}\Big|\dTo\,\phi\Big|^{\gamma(\xi)}d\xi\notag\\&-&\frac{\lambda}{\omega^-}\int_{\Lambda}\eta(\xi)|\phi|^{\omega(\xi)}d\xi-\frac{1}{s^-}\left(\int_{\Lambda}\Big|\dTo\,\phi\Big|^{\gamma(\xi)}d\xi-\lambda\int_{\Lambda}\eta(\xi)|\phi|^{\omega(\xi)}d\xi\right)\\
&\geq &\left(\frac{1}{\gamma^+}-\frac{1}{s^-}\right)\int_{\Lambda}\Big|\dTo\,\phi\Big|^{\gamma(\xi)}d\xi+\lambda\left(\frac{1}{s^-}-\frac{1}{\omega^-}\right)\int_{\Lambda}\eta(\xi)|\phi|^{\omega(\xi)}d\xi\\
&\geq &\left(\frac{1}{\gamma^+}-\frac{1}{s^-}\right)\|\phi\|^{\gamma^-}+{C_{10}}\lambda\left(\frac{1}{s^-}-\frac{1}{\omega^-}\right)\|\phi\|^{\omega^+}\\
&\geq &\left(\frac{{s^-}-{\gamma^+}}{{\gamma^+}{s^-}}+C_{10}\frac{{\omega^-}-{s^-}}{{s^-}{\omega^-}}\right)\|\phi\|^{\gamma^-}.
\end{eqnarray*}
So, if we choose $\lambda<\frac{{\omega^-}({s^-}-{\gamma^+})}{C_{10}\gamma^{+}({s^-}-{\omega^-})},$ we get
$\mathfrak{E}_{\lambda}(\phi)>0$. Consider  $\mathfrak{M}_{\lambda}(\Lambda)=
\mathfrak{M}_{\lambda}^{+}(\Lambda)\cup\mathfrak{M}_{\lambda}^{-}(\Lambda)$ (see Lemma \ref{Lemma 3-3}),
$\mathfrak{M}_{\lambda}^{+}(\Lambda)\cap \mathfrak{M}_{\lambda}^{-}(\Lambda)=\emptyset$, and
Lemma \ref{lemma 3-4}, one has $\phi\in \mathfrak{M}_{\lambda}^{-}(\Lambda).$
\end{proof}
\begin{theorem}\label{theorem 3-7}
If $0<\lambda<\lambda_1$, there exists a minimizer of $\mathfrak{E}_{\lambda}(\cdot)$ on $\mathfrak{M}_{\lambda}^{-}(\Lambda).$
\end{theorem}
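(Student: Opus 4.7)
The plan is to mirror the strategy used for Theorem~\ref{theorem 3-5}, adapted to the branch $\mathfrak{M}_{\lambda}^{-}(\Lambda)$ on which the energy is strictly positive. First, since $\mathfrak{E}_{\lambda}$ is bounded below and coercive on $\mathfrak{M}_{\lambda}(\Lambda)$, it remains so on the subset $\mathfrak{M}_{\lambda}^{-}(\Lambda)$, and I pick a minimizing sequence $\{\phi_{n}^{-}\}\subset\mathfrak{M}_{\lambda}^{-}(\Lambda)$ with $\mathfrak{E}_{\lambda}(\phi_{n}^{-})\to\alpha_{\lambda}^{-}$. Coercivity bounds $\{\phi_{n}^{-}\}$ in $\mathcal{H}_{\gamma(\xi)}^{\alpha,\beta;\chi}(\Lambda)$, and reflexivity yields a subsequence (not relabelled) with $\phi_{n}^{-}\rightharpoonup\phi_{0}^{-}$ in $\mathcal{H}_{\gamma(\xi)}^{\alpha,\beta;\chi}(\Lambda)$. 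The compact embeddings of Theorems~\ref{theorem 2-7} and~\ref{theorem 2-8} upgrade this to strong convergence $\phi_{n}^{-}\to\phi_{0}^{-}$ in $\mathscr{L}_{\eta(\xi)}^{\omega(\xi)}(\Lambda)$ and in $\mathscr{L}_{\mathcal{A}(\xi)}^{s(\xi)}(\Lambda)$.

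Next, I rule out $\phi_{0}^{-}\equiv 0$. Combining the Nehari identity $\mathbf{I}_{\lambda}(\phi_{n}^{-})=0$ with the defining inequality $\langle\mathbf{I}'_{\lambda}(\phi_{n}^{-}),\phi_{n}^{-}\rangle<0$ of $\mathfrak{M}_{\lambda}^{-}(\Lambda)$, in the spirit of Lemma~\ref{Lemma 3-3}, one obtains a uniform lower bound
$$\int_{\Lambda}\mathcal{A}(\xi)|\phi_{n}^{-}|^{s(\xi)}\,d\xi\geq C>0.$$
Passing to the limit via the strong convergence in $\mathscr{L}_{\mathcal{A}(\xi)}^{s(\xi)}(\Lambda)$ transfers this bound to $\phi_{0}^{-}$, so $\phi_{0}^{-}\not\equiv 0$.

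The central step is promoting weak convergence to strong convergence in $\mathcal{H}_{\gamma(\xi)}^{\alpha,\beta;\chi}(\Lambda)$. Assume the contrary. Then weak lower semicontinuity of the modular is strict,
$$\int_{\Lambda}\bigl|\dTo\phi_{0}^{-}\bigr|^{\gamma(\xi)}d\xi<\liminf_{n\to\infty}\int_{\Lambda}\bigl|\dTo\phi_{n}^{-}\bigr|^{\gamma(\xi)}d\xi,$$
which, together with $\mathbf{I}_{\lambda}(\phi_{n}^{-})=0$ and the strong convergence of the two weight integrals, forces $\mathbf{I}_{\lambda}(\phi_{0}^{-})<0$. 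I then invoke a fibering analysis of $g(t):=\mathfrak{E}_{\lambda}(t\phi_{0}^{-})$: the condition $\omega^{+}<\gamma^{-}<s^{-}$ together with $0<\lambda<\lambda_{1}$ forces $g$ to have the standard two-critical-point structure $0<t^{+}(\phi_{0}^{-})<t^{-}(\phi_{0}^{-})$, with $t^{-}(\phi_{0}^{-})\phi_{0}^{-}\in\mathfrak{M}_{\lambda}^{-}(\Lambda)$ realizing a local maximum of $g$. Since $g'(1)=\mathbf{I}_{\lambda}(\phi_{0}^{-})<0$, we have $t^{\ast}:=t^{-}(\phi_{0}^{-})<1$; applying the same fibering to each $\phi_{n}^{-}$ (which satisfies $t^{-}(\phi_{n}^{-})=1$) gives $\mathfrak{E}_{\lambda}(t^{\ast}\phi_{n}^{-})\leq\mathfrak{E}_{\lambda}(\phi_{n}^{-})$. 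Then
$$\alpha_{\lambda}^{-}\leq\mathfrak{E}_{\lambda}(t^{\ast}\phi_{0}^{-})<\liminf_{n\to\infty}\mathfrak{E}_{\lambda}(t^{\ast}\phi_{n}^{-})\leq\liminf_{n\to\infty}\mathfrak{E}_{\lambda}(\phi_{n}^{-})=\alpha_{\lambda}^{-},$$
a contradiction. Hence strong convergence holds, and passing $\mathbf{I}_{\lambda}(\phi_{n}^{-})=0$ and $\langle\mathbf{I}'_{\lambda}(\phi_{n}^{-}),\phi_{n}^{-}\rangle<0$ to the limit (using Lemma~\ref{Lemma 3-3} to exclude the degenerate case $\phi_{0}^{-}\in\mathfrak{M}_{\lambda}^{0}$) places $\phi_{0}^{-}$ in $\mathfrak{M}_{\lambda}^{-}(\Lambda)$ with $\mathfrak{E}_{\lambda}(\phi_{0}^{-})=\alpha_{\lambda}^{-}$.

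I expect the main obstacle to be the fibering step, because in the variable-exponent setting no explicit closed form for the critical values $t^{\pm}(\phi)$ is available, unlike in the constant-exponent Nehari method. One must carefully verify, using Propositions~\ref{proposition 2-2} and~\ref{proposition 2-9}, that the function $t\mapsto\langle\mathfrak{E}'_{\lambda}(t\phi),t\phi\rangle$ has the required sign pattern, that $t^{-}(\phi_{n}^{-})=1$ for each $n$, and that $t^{\ast}\in(0,1)$ lies in a range where $\mathfrak{E}_{\lambda}(t\phi_{n}^{-})\leq\mathfrak{E}_{\lambda}(\phi_{n}^{-})$ holds uniformly in $n$, so that the inequality chain above closes.
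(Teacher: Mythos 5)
Your proposal follows essentially the same route as the paper's proof: minimizing sequence, coercivity and weak convergence, compact embeddings into $\mathscr{L}_{\eta(\xi)}^{\omega(\xi)}(\Lambda)$ and $\mathscr{L}_{\mathcal{A}(\xi)}^{s(\xi)}(\Lambda)$, then a contradiction argument in which failure of strong convergence produces a scaling $t^{\ast}\phi_{0}^{-}\in\mathfrak{M}_{\lambda}^{-}(\Lambda)$ with $\mathfrak{E}_{\lambda}(t^{\ast}\phi_{0}^{-})<\alpha_{\lambda}^{-}$. In fact you are more explicit than the paper at the two points it glosses over (the nontriviality of $\phi_{0}^{-}$ and the fibering justification of $\mathfrak{E}_{\lambda}(t^{\ast}\phi_{n}^{-})\leq\mathfrak{E}_{\lambda}(\phi_{n}^{-})$), and the caveat you raise about the absence of a closed-form $t^{\pm}(\phi)$ in the variable-exponent setting applies equally to the paper's own argument.
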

\begin{proof}
Since $\mathfrak{E}_{\lambda}$ is bounded below on $\mathfrak{M}_{\lambda}(\Lambda)$ and
so on $\mathfrak{M}_{\lambda}^{-}(\Lambda)$, then there exists a minimizing sequence
$\{\phi_{n}^{-}\}\subseteq\mathfrak{M}_{\lambda}^{-}(\Lambda)$ such that
$$\lim_{n\to\infty}\mathfrak{E}_{\lambda}(\phi_{n}^{-})=\inf_{\phi\in\mathfrak{M}_{\lambda}^{-}(\Lambda)}\mathfrak{E}_{\lambda}(\Lambda)=\alpha_{\lambda}^{-}>0.$$
Since $\mathfrak{E}_{\lambda}$ is coercive, $\phi_{n}^{-}$ is bounded in $\mathcal{H}_{\gamma(\xi)}^{\alpha,\beta;\chi}(\Lambda)$, we can may assume that $\phi_n^{-}\rightharpoonup \phi_{0}^{-}$ in $\mathcal{H}_{\gamma(\xi)}^{\alpha,\beta;\chi}(\Lambda)$. Using the compact embeddings,  follows that
$$\phi_{n}^{-}\rightarrow \phi_{0}^{-} \,\,\,\mbox{in}\,\,\, \mathscr{L}_{\eta(\xi),\chi}^{\omega(\xi)}(\Lambda)$$
and
$$\phi_{n}^{-}\rightarrow \phi_{0}^{-} \,\,\,\mbox{in}\,\,\, \mathscr{L}_{\mathcal{A}(\xi),\chi}^{s(\xi)}(\Lambda).$$
Moreover, if $\phi_{0}^{-}\in\mathfrak{M}_{\lambda}^{-}(\Lambda)$, then there is a constant $t>0$ such that
$t{\phi}_{0}^{-}\in\mathfrak{M}_{\lambda}^{-}(\Lambda)$ and $\mathfrak{E}_{\lambda}(\phi_0^{-})\geq\mathfrak{E}_{\lambda}(t\phi_{0}^{-})$. Indeed, since
\begin{eqnarray*}
&&\left< \mathbf{I}'_{\lambda}(\phi), \phi\right>\notag\\&=& \int_{\Lambda}\gamma(\xi)\Big|\dTo\,\phi\Big|^{\gamma(\xi)}d\xi-\lambda\int_{\Lambda}\omega(\xi)\eta(\xi)|\phi|^{\omega(\xi)}d\xi-\int_{\Lambda}s(\xi)\mathcal{A}(\xi)|\phi|^{s(\xi)}d\xi,
\end{eqnarray*}
then
\begin{eqnarray*}
&&\left<\mathbf{I}'_{\lambda}(t\phi_{0}^{-}), t\phi_{0}^{-} \right>\notag\\&=&\int_{\Lambda}\gamma(\xi)\Big|\dTo\,(t\phi_{0}^{-})\Big|^{\gamma(\xi)}d\xi-\lambda\int_{\Lambda}\omega(\xi)\eta(\xi)|t\phi_{0}^{-}|^{\omega(\xi)}d\xi\notag\\&-&\int_{\Lambda}s(\xi)\mathcal{A}(\xi)|t\phi_{0}^{-}|^{s(\xi)}
d\xi\\
&\geq &{t^{\gamma^+}}{\gamma^+}\int_{\Lambda}\Big|\dTo\,\phi_{0}^{-}\Big|^{\gamma(\xi)}d\xi-\lambda {t^{\omega^-}}{\omega^-}\int_{\Lambda}\eta(\xi)|\phi_{0}^{-}|^{\omega(\xi)}d\xi\notag\\&-&{t^{s^-}}{s^-}\int_{\Lambda}\mathcal{A}(\xi)|\phi_{0}^{-}|^{s(\xi)}d\xi.
\end{eqnarray*}
Note that $\mathbf{I}'_{\lambda}(t\phi_{0}^{-})<0$, since $\omega^-<\gamma^+<s^-$, and under the assumptions on $a$ and $\mathcal{A}$. So using the definition of $\mathfrak{M}_{\lambda}^{-}(\Lambda)$, follows that $t\phi_{0}^{-}\in\mathfrak{M}_{\lambda}^{-}(\Lambda)$.

{\bf Affirmation: $\phi_n^{-}\rightarrow \phi_{0}^{-}$ in $\mathcal{H}_{\gamma(\xi)}^{\alpha,\beta;\chi}(\Lambda)$ }

Then using the fact that
$$\int_{\Lambda}\Big|\dTo\,\phi_{0}^{-}\Big|^{\gamma(\xi)}d\xi<\lim_{n\to\infty}\inf\int_{\Lambda}\Big|\dTo\,\phi_{n}^{-}\Big|^{\gamma(\xi)}d\xi,$$
yields
\begin{align*}
&\mathfrak{E}_{\lambda}(t\phi_{0}^{-})\notag\\&\leq \frac{t^{\gamma^+}}{\gamma^-}\int_{\Lambda}\Big|\dTo\,\phi_{0}^{-}\Big|^{\gamma(\xi)}d\xi-\lambda\frac{t^{\omega^-}}{\omega^+}\int_{\Lambda}\eta(\xi)|\phi_{0}^{-}|^{\omega(\xi)}d\xi-
\frac{t^{s^-}}{s^-}\int_{\Lambda}\mathcal{A}(\xi)|\phi_{0}^{-}|^{s(\xi)}d\xi\\
&<\lim_{n\to \infty}\left[\frac{t^{\gamma^+}}{\gamma^-}\int_{\Lambda}\Big|\dTo\,\phi_{n}^{-}\Big|^{\gamma(\xi)}d\xi-\lambda\frac{t^{\omega^-}}{\omega^+}\int_{\Lambda}\eta(\xi)|\phi_{n}^{-}|^{\omega(\xi)}d\xi\right.\\
 &\left.-
\frac{t^{s^-}}{s^-}\int_{\Lambda}\mathcal{A}(\xi)|\phi_{n}^{-}|^{s(\xi)}d\xi\right]\notag\\
&\leq \lim_{n\to\infty}\mathfrak{E}_{\lambda}(t\phi_{n}^{-})\leq\lim_{n\to\infty}\mathfrak{E}_{\lambda}(\phi_{n}^{-})=\inf_{\phi\in\mathfrak{M}_{\lambda}^{-}(\Lambda)}\mathfrak{E}_{\lambda}(\phi)=\alpha_{\lambda}^{-}.
\end{align*}
This implies that $\displaystyle \mathfrak{E}_{\lambda}(t\phi_{0}^{-})<\inf_{\phi\in\mathfrak{M}_{\lambda}^{-}(\Lambda)}\mathfrak{E}_{\lambda}(\phi)=\alpha_{\lambda}^{-},$ which is a contradiction. Hence, $\phi_{n}^{-}\rightarrow \phi_{0}^{-}$ in $\mathcal{H}_{\gamma(\xi)}^{\alpha,\beta;\chi}(\Lambda)$ and so 
$$\mathfrak{E}_{\lambda}(\phi_{0}^{-})=\lim_{n\to\infty}\mathfrak{E}_{\lambda}(\phi_{n}^{-})=
\inf_{\phi\in\mathfrak{M}_{\lambda}^{-}(\Lambda)}\mathfrak{E}_{\lambda}(\phi).$$
Thus, $\phi_{0}^{-}$ is a minimizer for $\mathfrak{E}_{\lambda}$ on $\mathfrak{M}_{\lambda}^{-}(\Lambda)$.
\end{proof}
\begin{corollary} Using \rm{Theorem \ref{theorem 3-5}} and {\rm Theorem \ref{theorem 3-7}}, there exists $\phi_{0}^{+}\in\mathfrak{M}_{\lambda}^{+}(\Lambda)$ and
$\phi_{0}^{-}\in\mathfrak{M}_{\lambda}^{-}(\Lambda)$ such that $\displaystyle\mathfrak{E}_{\lambda}(\phi_{0}^{+})=\inf_{\phi\in\mathfrak{M}_{\lambda}^{+}(\Lambda)}\mathfrak{E}_{\lambda}(\phi)$ and
$\displaystyle\mathfrak{E}_{\lambda}(\phi_{0}^{-})=\inf_{\phi\in\mathfrak{M}_{\lambda}^{-}(\Lambda)}\mathfrak{E}_{\lambda}(\phi)$. Moreover, since $\mathfrak{E}_{\lambda}\big(\phi_{0}^{\pm}\big)=\mathfrak{E}_{\lambda}\big(|\phi_{0}^{\pm}|\big)$ and $|\phi_{0}^{\pm}|\in\mathfrak{M}_{\lambda}^{\pm}(\Lambda)$, we may
assume $\phi_{0}^{\pm}\geq{0}.$ Now making use \rm{Theorem \ref{theorem 3-1}}, $\phi_{0}^{\pm}$ are critical points of $\mathfrak{E}_{\lambda}$ on $\mathcal{H}_{\gamma(\xi)}^{\alpha,\beta;\chi}(\Lambda)$
and hence are weak solutions of \rm{(\ref{1})}. Finally, using the Harnack inequality \rm{(Theorem \ref{Harnack})}, we concluded that $\phi_0^{\pm}$ are positive solutions of \rm{(\ref{1})}.
\end{corollary}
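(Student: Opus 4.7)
The plan is to assemble this corollary as a clean wrap-up of the preceding three theorems, with a small amount of care taken over the absolute-value step and the positivity upgrade. First, I would invoke Theorem~\ref{theorem 3-5} and Theorem~\ref{theorem 3-7} directly to obtain $\phi_{0}^{+}\in\mathfrak{M}_{\lambda}^{+}(\Lambda)$ and $\phi_{0}^{-}\in\mathfrak{M}_{\lambda}^{-}(\Lambda)$ achieving the respective infima $\alpha_\lambda^{\pm}$. This part is immediate because the previous two theorems were set up precisely to supply these minimizers, so no additional argument is needed here.

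Next, I would argue that one can replace $\phi_{0}^{\pm}$ by $|\phi_{0}^{\pm}|$ without changing the Euler functional or leaving the Nehari component. The key observation is that each of the three integrands
\[
\tfrac{1}{\gamma(\xi)}\bigl|\dTo\phi\bigr|^{\gamma(\xi)},\qquad \tfrac{1}{\omega(\xi)}\eta(\xi)|\phi|^{\omega(\xi)},\qquad \tfrac{1}{s(\xi)}\mathcal{A}(\xi)|\phi|^{s(\xi)}
\]
is even in $\phi$ (for the first one this uses the invariance of $|\dTo\phi|^{\gamma(\xi)}$ under $\phi\mapsto|\phi|$, which is the analogue of the classical Stampacchia-type property transferred to the $\chi$-Hilfer setting via the truncation approximation in $\mathcal{H}_{\gamma(\xi)}^{\alpha,\beta;\chi}(\Lambda)$). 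Hence $\mathfrak{E}_\lambda(|\phi_{0}^{\pm}|)=\mathfrak{E}_\lambda(\phi_{0}^{\pm})$ and the Nehari functional $\mathbf{I}_\lambda$ together with $\langle\mathbf{I}_\lambda',\cdot\rangle$ take identical values on $\phi_{0}^{\pm}$ and $|\phi_{0}^{\pm}|$, so $|\phi_{0}^{\pm}|$ remain in $\mathfrak{M}_\lambda^{\pm}(\Lambda)$ respectively. We may therefore assume $\phi_{0}^{\pm}\geq 0$.

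For the third step I would apply Theorem~\ref{theorem 3-1}: by Lemma~\ref{Lemma 3-3}, $\mathfrak{M}_\lambda^{0}=\emptyset$ for $0<\lambda<\lambda_1$, so $\phi_{0}^{\pm}\notin\mathfrak{M}_\lambda^{0}$, and the theorem then guarantees that these local minima on the Nehari manifold are critical points of $\mathfrak{E}_\lambda$ on the full space $\mathcal{H}_{\gamma(\xi)}^{\alpha,\beta;\chi}(\Lambda)$. Equating $\langle\mathfrak{E}_\lambda'(\phi_{0}^{\pm}),v\rangle=0$ for every test function $v$ yields, by a standard integration-by-parts argument for the $\chi$-Hilfer operator, that $\phi_{0}^{\pm}$ are nonnegative weak solutions of problem~(\ref{1}).

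Finally, to upgrade nonnegativity to strict positivity I would apply the Harnack inequality (Theorem~\ref{Harnack}). Since $\phi_{0}^{\pm}\in\mathfrak{M}_\lambda^{\pm}$ satisfy $\mathfrak{E}_\lambda(\phi_{0}^{+})=\alpha_\lambda^{+}<0$ (Lemma~\ref{lemma 3-4}) and $\mathfrak{E}_\lambda(\phi_{0}^{-})=\alpha_\lambda^{-}>0$, neither can be identically zero. Applying Theorem~\ref{Harnack} on subintervals $W^{\pm}\subset\Lambda$ to the nonnegative weak solutions $\phi_{0}^{\pm}$ gives $\sup_{W-}\phi_{0}^{\pm}\leq\sigma_3\sigma_1\inf_{W+}\phi_{0}^{\pm}$, so if the infimum vanished anywhere, then $\phi_{0}^{\pm}$ would vanish on a full neighborhood, and iterating would force $\phi_{0}^{\pm}\equiv 0$, contradicting nontriviality. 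Hence $\phi_{0}^{\pm}>0$ in $\Lambda$, completing the proof. The main obstacle I anticipate is not any of the above bookkeeping, but justifying rigorously the lattice property $\phi\in\mathcal{H}_{\gamma(\xi)}^{\alpha,\beta;\chi}\Rightarrow|\phi|\in\mathcal{H}_{\gamma(\xi)}^{\alpha,\beta;\chi}$ with $|\dTo|\phi||=|\dTo\phi|$ a.e., since for nonlocal fractional derivatives this is more subtle than for the classical gradient and normally requires an approximation by smooth mollifications or a dedicated chain-rule-type lemma for the $\chi$-Hilfer operator.
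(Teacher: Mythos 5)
Your proposal follows exactly the same route as the paper, whose ``proof'' of this corollary is nothing more than the four steps you list: extract the minimizers from Theorems \ref{theorem 3-5} and \ref{theorem 3-7}, pass to $|\phi_0^{\pm}|$ by evenness of $\mathfrak{E}_\lambda$, invoke Theorem \ref{theorem 3-1} to get critical points, and apply the Harnack inequality for strict positivity. The one subtlety you rightly flag --- justifying $|\phi|\in\mathcal{H}_{\gamma(\xi)}^{\alpha,\beta;\chi}(\Lambda)$ with $\big|\dTo|\phi|\big|=\big|\dTo\phi\big|$ a.e.\ for this nonlocal operator --- is simply asserted without proof in the paper, so your treatment is if anything more careful than the original.
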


\section{Conclusion and remarks}

We end this paper with the objectives achieved, that is, we investigate the existence and multiplicity of the Dirichlet fractional problem involving the equation $\gamma(\xi)$-Laplacian with non-negative weight functions using some variational techniques and the Nehari manifold. The particular choice of the $\psi$-Hilfer operator to work with this problem is motivated by several factors, in particular, the wide range of possible particular cases from the choice of the $\psi$ function. In addition, we can highlight the variational structure created through the $\psi$-Hilfer fractional operator, which makes the results more attractive. On the other hand, there is the memory factor that is directly linked to fractional operators. However, there are some open problems and future work motivated by Dirichlet problems proposed via the $\psi$-Hilfer fractional operator, as shown below:

1. Note that the results investigated here were considered only when $\frac{1}{\gamma(\xi)}<\alpha<1$, because for $0<\alpha<\frac{1}{\gamma( \xi)}$ we need density results, and we don't have them yet. This is an open problem in the area.

2. We can think of working the problem (\ref{1}) in the context of a double phase or involving a Kirchhoff-type equation.

The results presented above contribute significantly to the area of fractional operators with $p(x)$-Laplacian equations and will certainly serve as a basis and motivation for other future works, in particular, for the problems highlighted above.

\vspace{3.5em}
{\bf Acknowledgements.} All authors’ contributions to this manuscript are the same. All authors read and approved the final manuscript. We are very grateful to the anonymous reviewers for their useful comments that led to improvement of the manuscript.
\vspace{1.0em}

{\bf Data Availability Statement.} Not applicable.
\vspace{1.0em}

{\bf Conflicts of Interest.} The authors declare no conflict of interest.

\vspace{1cc}

\noindent
{\bf J. Vanterler da C. Sousa}\\
Center for Mathematics, Computing and Cognition,\\ Federal University of ABC, \\Avenida dos Estados, 5001, Bairro Bangu, 09.210-580, Santo Andre, SP - Brazil\\
E-mail: {\it jose.vanterler@ufabc.edu.br,vanterlermatematico@hotmail.com}

\vspace{0.1cc}

\noindent
{\bf D. S. Oliveira}\\
Universidade Tecnol\'ogica Federal do Paran\'a,\\ 85053-525, Guarapuava-PR, Brazil\\
E-mail: {\it oliveiradaniela@utfpr.edu.br}

\vspace{0.1cc}

\noindent
{\bf Ravi P. Agarwal}\\
Department of Mathematics, \\Texas A\&M University-Kingsville, Kingsville, TX 78363, USA\\
E-mail: {\it agarwal@tamuk.edu}


\begin{thebibliography}{1}


\bibitem{almeirda} 
{\small {\sc R. Almeida, N. R. O. Bastos and M. T. T. Monteiro:} {\it Modeling some real phenomena by fractional differential equations.} Math. Meth. Appl. Sci. {\bf 39} 16 (2016), 4846--4855.}

\bibitem{Alves}
{\small {\sc C. O. Alves and J. L. P. Barreiro:} {\it Existence and multiplicity of solutions for a $\gamma(\xi)$-Laplacian equation with critical growth.} J. Math. Anal. Appl. {\bf 403} 1 (2013), 143--154.}

\bibitem{Ambrosetti} 
{\small {\sc A. Ambrosetti, H. Brezis and G. Cerami:} {\it Combined effects of concave and convex nonlinearities in some elliptic problems.} J. Funct. Anal. {\bf 122} (1994), 519--543.}

\bibitem{Biswas} 
{\small {\sc R. Biswas and S. Tiwari:} {\it Nehari Manifold approach for fractional $p(\cdot)$-Laplacian system involving concave-convex nonlinearities.} Elec. J. Diff. Equ. {\bf 2020} 98 (2020), 1--29.}

\bibitem{Chabrowski} 
{\small {\sc J. Chabrowski and Y. Fu:} {\it Existence of solutions for $\gamma(\xi)$-Laplacian problems on a bounded domain.} J. Math. Anal. Appl. {\bf 306} 2 (2005), 604--618.}

\bibitem{Diening} 
{\small {\sc L. Diening, P. Harjulehto, P Hästö and M. Ruzicka:} {\it Lebesgue and Sobolev spaces with variable exponents.} Springer Berlin, Heidelberg, 2011.}

\bibitem{Diethelm} 
{\small {\sc K. Diethelm and N. J. Ford:} {\it Analysis of fractional differential equations.} J. Math. Anal. Appl. {\bf 265} 2 (2002), 229--248.}

\bibitem{Emunds1} 
{\small {\sc D. E. Edmunds and J. Rákosník:} {\it Sobolev embeddings with variable exponent.} Studia Math. {\bf 143} 3 (2000), 267--293.}

\bibitem{Edmunds2} 
{\small {\sc D. E. Edmunds and J. Rákosník:} {\it Sobolev embeddings with variable exponent II.} Math. Nachr. {\bf 246} 1 (2002), 53--67.}

\bibitem{Fan} 
{\small {\sc X. Fan, Q. Zhang and D. Zhao:} {\it Eigenvalues of $\gamma(\xi)$-Laplacian Dirichlet problem.} J. Math. Anal. Appl. {\bf 302} 2 (2005), 306--317.}

\bibitem{Fan1} 
{\small {\sc X. Fan, J. Shen and D. Zhao:} {\it Sobolev embedding theorems for spaces $W^{k,\gamma(\xi)}(\Lambda)$.} J. Math. Anal. Appl. {\bf 262} 2 (2001), 749--760.}

\bibitem{Figueiredo} 
{\small {\sc G. M. Figueiredo, G. M. Bisci and R. Servadei:} {\it The effect of the domain topology on the number of solutions of fractional Laplace problems.} Cal. Var. Partial Diff. Equ., {\bf 57} 4 (2018), 1--24.}

\bibitem{Jarad} 
{\small {\sc F. Jarad and T. Abdeljawad:} {\it Generalized fractional derivatives and Laplace transform.} Disc. Cont. Dyn. Sys.-S {\bf 13} 3 (2020), 709--722.}

\bibitem{Kilbas} 
{\small {\sc A. A. Kilbas, H. M. Srivastava, and J. J. Trujillo:} {\it Theory and applications of fractional differential equations.} Vol. 204. Elsevier, Amsterdam, 2006.}

\bibitem{Lakshmikantham} 
{\small {\sc V. Lakshmikantham and A. S. Vatsala:} {\it Basic theory of fractional differential equations.} Nonlinear Anal. Theory Methods Appl. {\bf 69} 8 (2008), 2677--2682.}

\bibitem{Ledesma} 
{\small {\sc C. T. Ledesma:} {\it Boundary value problem with fractional $p$-Laplacian operator.} Adv. Nonlinear Anal. {\bf 5} 2 (2016), 133--146.}

\bibitem{Mashiyev} 
{\small {\sc R. A. Mashiyev, S. Ogras, Z. Yucedag, and M. Avci:} {\it The Nehari manifold approach for Dirichlet problem involving the $\gamma(\xi)$-Laplacian equation.} J. Korean Math. Soc. {\bf 47} 4 (2010), 845--860.}

\bibitem{Miha} 
{\small {\sc M. Mihailescu:} {\it Existence and multiplicity of solutions for an elliptic equation with-growth conditions.} Glasgow Math. J. {\bf 48} 3 (2006), 411--418.}

\bibitem{Motreanu} 
{\small {\sc D. Motreanu, V. Motreanu, and N. Papageorgiou:} {\it Positive solutions and multiple solutions at non-resonance, resonance and near resonance for hemivariational inequalities with $p$-Laplacian.} Trans. Amer. Math. Soc. {\bf 360} 5 (2008), 2527--2545.}

\bibitem{Papa} 
{\small {\sc N. S. Papageorgiou and P. Winkert:} {\it Positive solutions for weighted singular $p$-Laplace equations via Nehari manifolds.} Appl. Anal. {\bf 100} 11 (2019), 2436--2448.}

\bibitem{Piersanti} 
{\small {\sc P. Piersanti and P. Pucci:} {\it Existence theorems for fractional $p$-Laplacian problems.} Anal. Appl. {\bf 15} 5 (2017), 607--640.}

\bibitem{Rabinowitz} 
{\small {\sc P. H. Rabinowitz:} {\it Minimax methods in critical point theory with applications to differential equations.} CBMS Reg. Conf. Series in Math, vol. 65, 1984.}

\bibitem{Repov} 
{\small {\sc D. Repovš and C. Vetro:} {\it The behavior of solutions of a parametric weighted $(p,q)$-Laplacian equation.} AIMS Math. {\bf 7} 1 (2022), 499--517.}

\bibitem{Saoudi} 
{\small {\sc K. Saoudi:} {\it A singular system involving the fractional $p$-Laplacian operator via the Nehari manifold approach.} Complex Anal. Operator Theory. {\bf 13} 3 (2019), 801--818.}

\bibitem{maria} 
{\small {\sc H. M. Srivastava and J. Vanterler da C. Sousa:} {\it Multiplicity of solutions for fractional-order differential equations via the $\kappa(x)$-Laplacian operator and the Genus theory.} Fractal Fract. {\bf 6} 9 (2022), 481.}

\bibitem{torresn}
{\small {\sc C. Torres:} {\it Impulsive fractional boundary value problem with $p-$Laplace operator.} J. Appl. Math. Comput. {\bf 55} (2017), 257--278.}

\bibitem{Truong} 
{\small {\sc L. X. Truong:} {\it The Nehari manifold for fractional $p$-Laplacian equation with logarithmic nonlinearity on whole space.} Comput. Math. Appl. {\bf 78} 12 (2019), 3931--3940.}

\bibitem{koooo} 
{\small {\sc J. vanterler da C. Sousa, C. T. Ledesma, M. Pigossi and J. Zuo:} {\it Nehari manifold for weighted singular fractional $p$-Laplace equations.} Bull. Braz. Math. Soc., New Series {\bf 53}4 (2022), 1245--1275.}

\bibitem{Sousa111} 
{\small {\sc J. Vanterler da C. Sousa:} {\it Existence and uniqueness of solutions for the fractional differential equations with $p$-Laplacian in $\mathcal{H}^{\nu,\eta;\psi}_{p}$.} J. Appl. Anal. Comput. {\bf 12} 2 (2022), 622--661.}

\bibitem{ze} 
{\small {\sc J. Vanterler da C. Sousa, M. N. N. dos Santos, E. da Costa, L. A. Magna, and E. Capelas de Oliveira:} {\it A new approach to the validation of an ESR fractional model.} Comput. Appl. Math. {\bf 40} 3 (2021), 1--20.}

\bibitem{Sousa} 
{\small {\sc J. Vanterler da C. Sousa, J. Zuo and D. O'Regan:} {\it The Nehari manifold for a $\psi$-Hilfer fractional $p$-Laplacian.} Appl. Anal. {\bf 101} 14 (2021), 5076--5106.}

\bibitem{Sousa10}  
{\small {\sc J. Vanterler da C. Sousa L. S. Tavares and C. E. Torres:} {\it A variational approach for a problem involving a $\psi$-Hilfer fractional operator.} J. Appl. Anal. Comput. {\bf 11} 3 (2021), 1610--1630.}

\bibitem{Sousa11} 
{\small {\sc J. Vanterler da C. Sousa:} {\it Nehari manifold and bifurcation for a $\psi$-Hilfer fractional $p$-Laplacian.} Math. Meth. Appl. Sci. (2021).}

\bibitem{J3} 
{\small {\sc J. Vanterler da C. Sousa and E. Capelas de Oliveira:} {\it Fractional order pseudoparabolic partial differential equation: Ulam–Hyers stability.} Bull. Braz. Math. Soc., New Series {\bf 50} 2 (2019), 481--496.}

\bibitem{J2} 
{\small {\sc J. Vanterler da C. Sousa and E. Capelas de Oliveira:} {\sc Leibniz type rule: $\psi$-Hilfer fractional operator.} Commun. Nonlinear Sci. Numer. Simul. {\bf 77} (2019), 305--311.}

\bibitem{J1} 
{\small{\sc J. Vanterler da C. Sousa and E. Capelas de Oliveira:} {\it On the $\psi$-Hilfer fractional derivative.} Commun. Nonlinear Sci. Numer. Simul. {\bf 60} (2018), 72--91.}

\bibitem{Xiang} 
{\small {\sc M. Xiang, B. Zhang and V. D. Rădulescu:} {\it Existence of solutions for perturbed fractional p-Laplacian equations.} J. Diff. Equ. {\bf 260} 2 (2016), 1392--1413.}

\bibitem{Zhou} 
{\small {\sc Y. Zhou, J. Wang and L. Zhang:} {\it Basic theory of fractional differential equations.} World scientific, Singapore, 2016.}

\bibitem{Winkert}
{\small {\sc P. Winkert:} {\it $L^{\infty}$-estimates for nonlinear elliptic Neumann boundary value problems.} Nonlinear Diff. Equ. Appl. NoDEA {\bf 17} 3 (2010), 289--302.}

\bibitem{Wu} 
{\small {\sc T. F. Wu:} {\it Multiplicity of positive solution of $p$-Laplacian problems with sign-changing weight functions.} Int. J. Math. Anal.(Ruse) {\bf 1} 12 (2007), 557--563.}



\end{thebibliography}
\end{document}